\newcommand\captionof[1]{\def\@captype{#1}\caption}
\def\eps{\varepsilon}
\newtheorem{theorem}{Theorem}[section]
\newtheorem{lemma}{Lemma}
\newtheorem{remark}{Remark}[section]
\begin{document}
\title{A phenotype-structured model for the tumour-immune response}
\author{Zineb Kaid $^{1,2}$, Camille Pouchol$^{3}$,
Jean Clairambault $^{4,5}$}

\maketitle
\begin{center}
$^{1}$ Laboratory of Biomathematics, University Djillali Liabes, BP 89, 22000 Sidi Bel Abbes, Algeria	\\
$^{2}$ Dynamical Systems and Applications Laboratory, Department of Mathematics, Faculty of Sciences, University Abou Bekr Belkaid, BP 119, 13000 Tlemcen, Algeria\\
$^{3}$ Université Paris Cité, MAP5 (UMR 8145), FP2M, CNRS FR 2036, F-75006 Paris, France\\	
$^{4}$  Laboratoire Jacques-Louis Lions, Sorbonne Universit\'e, UPMC Univ. Paris 06, CNRS UMR 7598,\\ 4, place Jussieu, BC 187, F75252 Paris Cedex 05, France\\
$^{5}$ Team Mamba, INRIA Paris, 2 rue Simone Iff, CS 42112, 75589 Paris cedex 12, France\\
\end{center}
\begin{abstract}
{This paper presents a mathematical model for tumour-immune response interactions in the perspective of immunotherapy by immune checkpoint inhibitors ({\it ICI}s). The model is of the nonlocal integro-differential Lotka-Volterra type, in which heterogeneity of the cell populations is taken into account by structuring variables that are continuous internal traits (aka {\it phenotypes}) present in each individual cell. These represent a lumped ``aggressiveness'', i.e., for tumour cells, malignancy understood as the ability to thrive in a viable state under attack by immune cells or drugs - which we propose to identify as a potential of de-differentiation -, and for immune cells, ability to kill tumour cells, in other words anti-tumour efficacy. We analyse the asymptotic behaviour of the model in the absence of treatment. By means of two theorems, we characterise the limits of the integro-differential system under an {\it a priori} convergence hypothesis. We illustrate our results with a few numerical simulations, which show that our model reproduces the three $E$s of immunoediting: elimination, equilibrium, and escape. Finally, we exemplify the possible impact of {\it ICI}s on these three {\it E}s.}
\end{abstract}
\noindent {\textbf{Keywords:}}{ Tumour-Immune interactions, Phenotype-structured model, Asymptotic analysis, Immune checkpoint inhibitors.}\hspace{.2cm}
2020 MSC: 35B40, 35F50, 35Q92, 92-10, 92C50, 92D25
\section{Introduction}
In the field of oncology, several clinical and experimental studies concur to show that the immune system plays a decisive role, providing tumour control, long-term clinical benefits and prolonged survival~\cite{Stewart}. Nevertheless, the anti-tumour immune response is an extremely complex process that depends on many factors. In this context, mathematical models can help understand the interactions between tumour growth and the immune response. 

In the present article, we propose an integro-differential equations based model, designed to analyse tumour-immune interactions between cell populations and the asymptotic behaviours of these populations. We follow the principle of modelling cell population heterogeneity by structuring them by relevant internal traits (aka cell {\it phenotypes}), as initiated in \cite{Lorz1} and partially reviewed in \cite{JCCPBioMath}. 

Deterministic phenotype-structured models, which are usually stated in terms of non-local partial differential equations or integro-differential ones have been widely used to describe phenotype heterogeneity in tumour cell populations. In these models, the phenotypic state is represented by a continuous real variable, modelling different biological characteristics such as viability and fecundity, see~\cite{Almeida_et_al, Alvarez_et_al, RCLCJ, P-al} and the references therein. 

To the best of our knowledge, the first phenotype-structured model for tumour-immune interactions was proposed by Delitala and Lorenzi~\cite{LTDM}, where a tumour cell population characterised by heterogeneous antigenic expressions is exposed to the action of antigen-presenting cells and immune T-cells. More precisely, their model incorporates five populations of cells. All populations but one are assumed in~\cite{LTDM} to be structured by a real continuous variable in $[0,1]$ representing an internal phenotypic state of the cell. Their model reproduces well the selective recognition and learning processes in which immune cells are involved. Our model, which belongs to the category of non-local Lotka-Volterra systems, has been designed to offer a rationale for the use of immune checkpoint inhibitors~\cite{Schreiber-al}. 

Such models, which can be derived from stochastic individual-based models~\cite{Champagnat}, are known to possibly lead to the concentration of populations on one or several phenotypes, which will be shown in the model described below for the tumour cell population, under restrictive assumptions. The original motivation for this work is the article~\cite{P-al}, in  which an integro-differential system for the time evolution of densities of cancer and healthy cells, structured by a continuous phenotypic variable, representing their level of resistance to chemotherapy to which they are exposed, is studied. In a completely different context, which is the application to $ICI$ immunotherapy, we will here make use of similar methods of asymptotic analysis. 

\paragraph{Main theoretical and numerical results.} In summary, we identify the possible (generally unique) non-trivial limits the solution to the integro-differential system may have. More precisely, under the strong {\it a priori} assumption that the density of cancer cells converges, we prove that its limit, when non-zero, is a weighted Dirac mass. We provide in Section~\ref{sec4} a formula to compute the weight and location.

Moreover, we present simulation results that show how our model illustrates the three $E$s of immunoediting (elimination, equilibrium, and escape) and that it may also exhibit oscillatory solutions. We mention that, in the context of our model, which is of the non local Lotka-Volterra type, it may be difficult to distinguish between equilibrium and escape. Nevertheless, we propose to interpret solutions for which the system reaches its carrying capacity as tumour escape, whereas solutions for which the tumour cell population is contained below its carrying capacity may be interpreted as an equilibrium between tumour and immune cells.

\paragraph{Outline of the paper.} The paper is organised as follows. In Section \ref{sec2}, we start by introducing biological motivations for the development of the model under study. The integro-differential model itself is presented in detail in Section \ref{sec3}. We then analyse the model and prove some asymptotic properties in Section \ref{sec4}. In Section \ref{sec5}, we present some numerical results. In Section \ref{sec6}, we conclude with several comments and open questions.

\section{Biological background}\label{sec2}

When a cancer cell population thrives, the immune response, and essentially its part that is constituted of CD8+ T-lymphocytes (for the adaptive response) and NK-lymphocytes (for the innate response), consists in recognising as foe elements and killing these cancer cells. This has been called immunosurveillance, later immunoediting~\cite{Bertolasobook, Pradeubook, Schreiber-al}, which may consist of three different configurations: eradication, equilibrium or escape. If this process is performed during the early stages of tumour initiation, the tumour is quickly and successfully eradicated. The immune response may also only contain the development of tumour, stabilising it at an equilibrium mass between eradication and maximum tumour carrying capacity, without eliminating it. However, cancer cells can escape these innate NK-cell and adaptive specific T-cell immune responses in the course of genetic and phenotypic evolution at the time scale of a cancer disease, and lead the total tumour mass to its maximum carrying capacity. More precisely, phenotypic\footnote{The term phenotype means here the set of characteristics of an individual (a cell, represented by a population density function taking a given value, in our model), resulting from the interaction of its genotype with the environment; `observable' characteristics are not necessarily visibly observed, but may be internal traits, e.g., related to some epigenetic, reversible, modification by graft of a chemical radical (methyl, acetyl...) on some base of the DNA before transcription, or on some amino acid in a histone protein, such traits being possibly evidenced after some dynamic stimulation only.} heterogeneity in the cancer cell population, involving its possible internal invasion by secondarily mutated, robust, cells, may be responsible for both tumour escape and treatment failure. Here, a focus is set on immunotolerance~\cite{Pradeubook, Schreiber-al}, which renders cancer cells able to evade immune detection and elimination. Indeed, cancer cells have the resource to weaken the immune response by emitting molecules such as PD-L1 (i.e., PD-1 ligand) and CTLA-4 \footnote{PD-1: programmed cell death protein 1, a receptor located on NK- and T-lymphocytes; CTLA-4: cytotoxic T lymphocyte antigen 4 for the adaptive response of T-lymphocytes.} which can respectively bind to the PD-1 and B7 receptors on activated T-cells, inhibiting their cytotoxic activity and reducing their own immunogenicity. This is represented in the model by direct competition between cancer cells and T-lymphocytes. As regards innate immunity, the role of NK-lymphocytes (readily effective on cancer cells with lacking MHC-I surface antigens~\cite{Guzman2020}, not the same mechanism as in the case of CD8+ T-cells, that are activated by tumour antigen-presenting cells, APCs), has recently gained more consideration, in particular, because a role for anti-PD1/anti-PDL1 has been suspected for them in cancer immunotherapies~\cite{Pesce2019}.

Immunotherapy with immune checkpoint inhibitors~\cite{Schreiber-al} (hereafter noted {\it ICI}s) is a recently introduced class of drugs (aiming at being less toxic than the classical anti-cancer therapies, chemotherapy and radiation therapy) that inhibit such cancer cell-produced inactivation of T- and NK-lymphocytes, either at the receptor sites on lymphocytes or by inhibiting the ligands themselves. The clinical use, firstly of anti-CTLA4 drug Ipilimumab, which has been shown to mainly target the priming lymphocyte response at the level of lymphoid organs~\cite{Zhang2021}, and later of direct (at the tumour site) antagonisers of PD-L1 to PD-1 binding, drugs Nivolumab and Pembrolizumab~\cite{Han2020}~\footnote{Note that in the sequel, as our model aims at representing direct tumour-immune interactions and their possible enhancing by immunotherapy, the term {\it ICI}s represents this second class of drugs, anti-PD1 anti-PDL1 drugs.}, has drastically modified the prognosis of several advanced cancers that were until recently out of reach (e.g., melanoma, a skin cancer with very bad prognosis~\cite{Robert-al}), offering sustained positive responses (about $20\%$ of complete cures, the remaining 80\% consisting of non- or partial responders with relapse).  However, not all cancer types respond as well as melanoma. To the best of our knowledge, the reasons for successes or failures are still unknown. Moreover, there is no clear dose-response relationship and a maximum tolerated dose, for checkpoint inhibitors, has not been identified as yet~\cite{Le_Louedec_et_al}. 
 
\section{The model}{\label{sec3}}
\subsection{Continuous phenotype-structured model for tumour-immune interactions}

The main interest of structuring a cell population dynamic model with continuous phenotypes is to take into account the continuously evolving phenotypes of interest in each individual cell, responsible for their heterogeneity and plasticity~\cite{JC4ISMCO, JCECC23} in the cell population, with respect to relevant functional traits determining their adaptation to changing conditions in the tumour micro-environment (such as externally control delivery of drugs) and to their mutual interactions. In our case, we consider as relevant to represent in the NK- and T-lymphocyte population a phenotype $y$ coding continuously for their anticancer predating power, that can be considerably weakened by PD-L1 ligands emitted by tumour cells, and, as regards tumour cells, a lumped phenotype $x$ coding for their malignancy, which may be evaluated by their ability to de-differentiate. Such ability for individual cancer cells to de-differentiate, i.e., to gain a status closer to so-called stem cells by swimming upstream in their normally unique-sense maturation stream within their cell lineage is well known in cancer~\cite{Shen}. It allows cancer cells to gain plasticity and thus, by reactivating defence mechanisms that are normally silenced in healthy cells, to escape the predation of NK- and T-lymphocytes at a tumour site.

To grab more precisely the idea of de-differentiation, one must have in mind that normal cells differentiate (i.e., mature) within their lineage from pluripotent, immature, stem cells to terminally differentiated, fully functional, cells, and that such differentiation is physiologically irreversible (however not any more in cancer cells, in which mutations are not at all necessary for such reversal mechanism, which relies on fast hijacking of epigenetic enzymes~\cite{Sharma} or transcription factors~\cite{Leeetal2020}). The best studied cases (documented in any textbook of cell biology, see, e.g.,~\cite{Zon2001} for haematopoiesis and~\cite{WA1984} for epithelial cells) of such physiological maturation are, in the bone marrow, the haemotopoietic lineages from pluripotent haematopoietic stem cells to, e.g., mature granulocytes or lymphocytes, and, in the gastrointestinal tract, the enterocyte lineage from crypt cells to mature enterocytes of villi in the jejunum. The ability of cancer cells to swim up this normally one-way maturation stream is one of the main phenomena referred to when mentioning their plasticity. Another well-known case of plasticity in cancer is transdifferentiation (i.e., jumping directly from one cell phenotype to another one without following any upstream way), in particular epithelial to mesenchymal transition (EMT, and its reverse MET), by which epithelial cells acquire a mesenchymal phenotype, moving in a fibroblast-like way far from their normal layers, a fundamental process for making cancer metastases~\cite{Kalluri2009}. In phenotype-structured cell population dynamic models, as in~\cite{Alvarez_et_al}, a continuous plasticity phenotype will more generally quantify the ability of a cell to change its phenotype in an adaptive response to changes in its environment.

Such representation by phenotype-structured cell population models of the adaptation of individual cells to a changing environment is not new, and it has already been studied in depth, from the mathematical point of view, in a book by B. Perthame~\cite{Perthame} and illustrated in a variety of articles in the last decade~\cite{Alvarez_et_al, AlvarezJC2023, Ardashevaetal2020,  Chisholm2015, Chisholm2016a, Chisholm2016b, JCCPBioMath, LTDM, RCLCJ, Lorz1, Pouchol-Lorenzi, P-al}. Of note, the consideration of heterogeneity and plasticity in cancer cell populations has lately become of high interest to cancer biologists~\cite{Meacham2013, Tang2012, Yabo2022}, and we suggest that a proper way to take them into account in mathematical models is by studying such adaptive phenotype-structured cell population dynamic equations, as proposed, e.g., in~\cite{JCECC23, Shen}. We do not contend that such modelling is universally popular in the community of cancer biologists, of course, but that at least some of them consider these models with interest and feed mathematicians with the physiological and physiopathological knowledge they need to design biologically relevant mathematical models (that the converse feedback from mathematics towards experimental biology and clinical therapeutics is still in its infancy is another story).

Clearly, direct biological measurements of the expression of such traits as the ones mentioned above are usually lacking (and the same is true about the expression of traits of drug resistance), so that they may be assessed, in the best cases, only by indirect concentrations of proteins~\cite{Leeetal2020} (or of their activity when they are enzymes or transcription factors), or expression of genes~\cite{vanderLeun2020}, indeed not easy to measure experimentally, or else they must be taken as hidden variables of functional nature, such as viability, fecundity, plasticity, motility, that determine cell population fate. Here, we choose to structure our equations representing the dynamics of cell populations under study by the hidden continuous functional variables $x$ (malignancy in tumour cells) and $y$ (anti-tumour aggressiveness, i.e., efficacy, in immune cells) presented above. 

\subsection{Biological motivations}
In the model relying on the system of three phenotype-structured equations shown in Subsection~\ref{subsec_model}:
\begin{itemize}
\item For the tumour cell population of density $n(t,x)$, a cell of phenotype $x$ is all the more malignant, i.e., able to thrive, as $x$ is close to $1$, and conversely less malignant when $x$ is close to $0$. More precisely, the malignancy trait $x$ represents a progression potential towards stemness (ability to de-differentiate, aka plasticity).

Let us mention that the malignancy trait $x$ might in principle be measured in single cells
by assessing the expression of genes like the Yamanaka genes, identified in 2006, that enable de-differentiation, yielding induced pluripotent stem cells from embryonic or even mature mouse fibroblasts~\cite{YamanakaCell2006}. More recently, a de-differentiated phenotype $MIT^{low}/AXL^{high}$ phenotype, defined by the concomitant downregulation of the transcription factor $MIT$ and accumulation of the tyrosine kinase receptor $AXL$, has been evidenced in immunotherapy-resistant melanoma cells~\cite{Leeetal2020}, which could provide a measurable basis for such continuous malignancy trait $x$ identified as a potential for tumour cells to de-differentiate in response to deadly attacks coming from the immune response or more generally from the tumour microenvironment, including drugs. Importantly, we assume in this model that both the density of a loss-of-self in tumour cells sensed by NK-cells 
 (made precise in the next paragraph) and the density of specific tumour antigens sensed by APCs reflect the level of the {\em hidden} tumour aggressiveness, or malignancy, phenotype $x$ in the tumour cell population, even though the anti-tumour action of lymphocytes will be directed towards the {\em manifest} general loss-of-self (for NK-cells) or specific tumour antigen-bearing cells (for T-cells).
\item For the T-lymphocyte population and for the non-adaptive NK-lymphocyte population, $\ell(t,y)$ for effector cells and $p(t,y)$ for na\"ive cells in the system of equations~\eqref{ID} shown in Section~\ref{subsec_model} below, in a similar way, we structure it by a phenotype $y$ of anti-tumour aggressiveness, or efficacy, which may be defined as the reverse of the `dysfunction' or `exhaustion' phenotype that has been observed in CD8$^+$ T-cells exhibiting incapacity to efficaciously fight tumour cells. In our model, the difference between NK-lymphocytes and effector (CD8+) T-cells consists in the nature of their action on tumour cells, either independent of the tumour phenotype $x$ for NK-cells represented below by a function $\varphi(t)$, or highly dependent on it for T-cells, represented by a function $\varphi(t,x)$. In the analysis of our model, we will study separately the case of innate (function $\varphi(t)$) and adaptive (function $\varphi(t,x)$) immune response, and also a mix of these two cases. To identify and measure in single cells such dysfunction or exhaustion in T-cells, different biological markers have been proposed; they have been recently reviewed in~\cite{vanderLeun2020} (an article in which it is, in particular, noted that {\it ``T cell dysfunctionality is a gradual, not a binary, state''}, which fully justifies the continuous character of our structure variable $y$). The closer the phenotype $y$ approaches $0$, the less aggressive are T- and NK-lymphocytes, i.e., less competent to kill cancer cells (complete exhaustion), whereas if $y$ approaches $1$, they are highly aggressive (full competence) against the targeted tumour cells, an aggressiveness identified by their competence as immune cells due to the tumour antigen recognition performed by the APCs and transmitted to na\"ive T-cells in lymphoid organs, or to the absence of MHC-I\footnote{The Major Histocompatility Complex I, MHC-I, is present in all jawed vertebrates, hence in Man, species to which we will limit the scope of our model, which is intended to pave the way for cancer immunotherapy.} antigens (loss-of-self) in tumour cells in the case of NK-cells. The principle of immune checkpoint inhibitor ({\it ICI}) immunotherapy is to boost CD8+ T-lymphocytes and NK-lymphocytes in their efficacy by antagonising such tumour-emitted inhibitory mechanisms, mainly, in the modelling framework presented here, PD-L1 to PD-1 binding on T-cells and on NK-cells.
\end{itemize}

\subsection{Modelling choices for the mathematical functions of phenotypes $x$ and $y$}
In the absence of experimental data, the precise choices for functions $r, d, \mu, \nu, \varphi$, used in the system~\eqref{ID} shown in Subsection~\ref{subsec_model} below, are largely arbitrary, only guided by physiological considerations on an assumed monotonicity.
They are listed in Table~\ref{tab1} of Section~\ref{sec5} for simulations, reflecting such monotonicity: non-increasing for $r, d, \mu, \nu$, non-decreasing for the weight function $\psi$ that defines the immune response $\varphi$ in the case of innate immunity by NK-cells (see below). The biological background for these functions is as follows. 
\begin{itemize}
\item We assume that in the absence of immune response, tumour cells undergo logistic growth, with a net growth rate (aka fitness) defined by
\begin{equation}
\label{fitness}
r(x)-d(x)\rho(t).
\end{equation}
Here, the function $r(x)$ stands for the intrinsic proliferation rate. As $x$ stands for a de-differentiation, stem-like, cell phenotype, admitting that a stem-like status does not favour replication velocity, $r$ will typically be assumed to be a positive, decreasing function of $x$ on $[0,1]$, e.g., of the form $r(x)=r_0-\eta x^2$
where the parameter $r_0>0$ corresponds to the maximum fitness of cancer cells, while $\eta>0$ provides a measure of the strength of natural selection in the absence of the immune response, with $r_0- \eta > 0$. The term $d(x)\rho(t)$ models the intrinsic death rate due to within-population competition for space and resources, assumed to be proportional to the total population mass of tumour cells $\rho(t)$. The function $d$  will typically be taken to be a positive, decreasing function of $x$ on $[0,1]$ (in the same way as for the replication function $r$, a de-differentiated, stem-like, status is admitted to protect cells from the natural death term represented by the function $d$).

The fitness structure chosen here for the tumour and for the immune cell population is of the nonlocal Lotka-Volterra type. It has been in particularly used in~\cite{Perthame} to model the adaptation of individuals to their environment. 
\item We assume that, once an immune response has been activated, the tumour cells interact with NK-cells as an added death term at a rate which is proportional to the product of the tumour cell population density by a weighted integral $\varphi(t)$ given by
\begin{equation}
\label{non-adaptive}
\varphi(t)=\displaystyle\int_{0}^{1}\psi(y)\ell(t,y)\, dy,
\end{equation} 
where $\psi$ is a positive function, which we will take to be non-decreasing on $[0,1]$, $\mu(x)$ being a sensitivity function. We note that in this formulation, the immune response $\varphi(t)$, emitted by NK-cells present at the tumour site, is non-specific, only the tumoral sensitivity function $\mu(x)$ makes it somehow specific; the function $\varphi(t)$ then stands for a response in which the phenotype $y$ in lymphocytes is averaged over all the  population of activated NK-cells, that are sensitive to to the {\it loss-of-self} in tumour cells, $\varphi(t)$ representing a sort of ``mass immune response''. 
\item In order to account for an adaptive, specific, immune response which is the one of T-cells, we more generally consider the immune reaction function $\varphi$ to be of the form
\begin{equation}
\label{eq-varphi}
\varphi(t,x)=\int_{0}^{1}\Psi(x,y)\ell(t,y)\, dy.
\end{equation}
Here, the weight function $\psi(y)$ of the innate response is replaced by $\Psi(x,y)$, which we typically take to be the product of a function $\psi(y)$ by a localisation kernel, e.g., 
\begin{equation}
\label{adaptative-function}
\Psi(x,y)=\frac{\psi(y)}{v}e^{-|x-y|/v},
\end{equation}
in which $\psi$ will again be a positive function, non-decreasing on $[0,1]$, assumed for simplicity to be the same as in the innate, non-adaptive case~\eqref{non-adaptive}. Parameter $v$ is the precision with which the immune response targets the cancer cell population (as identified by its malignancy trait~$x$, representing specific tumour antigens borne by the cancer cells).

We will in fact in simulations consider separately these two cases, native non-specific (NK-cells: $\varphi(t)$ given by~\eqref{non-adaptive}) and adaptive specific (T-cells: $\varphi(t,x)$ given by~\eqref{eq-varphi}) anti-tumour immune response, and also a mixed case, convex combination of the two immune responses, non-specific (NK-cells) and specific (T-cells):
\begin{equation}
\label{combary}
\Psi_{\lambda}(x,y)=\left((1-\lambda)+\lambda \frac{1}{v}e^{-|x-y|/v}\right)\psi(y), \quad \lambda\in [0,1],
\end{equation}
corresponding to simultaneous and independent activation of NK-cells and T-cells by loss-of-self (NK-cells) and specific tumour antigen (T-cells) stimuli. This choice interpolates (function~$\psi$ being fixed) between~\eqref{non-adaptive} obtained with $\lambda = 0$ (for NK-cells), and~\eqref{eq-varphi}, with the choice~\eqref{adaptative-function}, obtained with $\lambda = 1$ (for T-cells).
\item The function $\mu(x)$ mentioned above represents a  factor of sensitivity to the effects of the immune response. As de-differentiation is supposed to protect tumour cells from these effects (e.g., by hiding tumoral antigens, targets of lymphocytes), $\mu$ will be a positive decreasing function of $x$.
\item The amplification of the na\"ive T-lymphocytes $p(t,y)$ at lymphoid organs is related to the mean $x$ malignancy value through a weighted integral $\chi(t,y)$ of the tumour cell population, representing  the message borne by APCs to initiate the adaptive anti-tumour immune response produced in the lymphoid organs. When an APC detects a tumour cell, the related antigen is presented to na\"ive T-cells in lymphoid organs. Thus, na\"ive T-cells that recognise this antigen as their cognate one become activated and start their ammplification, ie., they start to proliferate and, through a complex process chain, they become able to recognise and attack tumour cells that express the cognate antigen. The function $\chi(t,y)$ is defined as
\begin{equation}
\label{eq-chi}
\chi(t,y)=\displaystyle\int_{0}^{1}\omega(x,y)n(t,x)\,dx,
\end{equation}
where $\omega$ is another localisation kernel such as $\omega(x,y) = \alpha \frac{1}{s}e^{-|x-y|/s}$,
so as to represent a more or less faithful tumour antigen detection message transmitted  by the APCs (whose mission is to activate na\"ive T-lymphocytes) to the lymphoid organs about both the size and malignancy of the tumour. The efficacy of activated T-lymphocytes in killing tumour cells depends on the initial size of the tumour and on how localised the kernel is (i.e., on the width of the range of phenotypes $y$ concerned by their detected tumour cognates~$x$, which can be measured by the value of the parameter $s$ in the proposed function $\omega(x,y)$). The parameter $\alpha$ represents the strength of the immune response. In the present model, communication between recognition at the contact of tumour cells and activation of na\"ive T-lymphocytes at the site of lymphoid organs will be represented, for the sake of simplicity without considering any delay, by the shortcut of the function $\omega(x,y)$. In this localisation kernel function, the parameter $s$ may be seen as the precision (all the higher as $s$ is lower) of the detection of the malignancy trait $x$ in the cancer cell population by APCs or circulating NK-cells.
\item We consider a similar mechanism for NK-lymphocytes, that are known to proliferate and amplify not only in the bone marrow but also in lymphoid organs, in the same way as T-lymphocytes do~\cite{Abel2018}. In the case of this innate immune response, there are no APCs, but the message from sensor patrolling NK-lymphocytes to  proliferating NK-lymphocytes in lymphoid organs is assumed to be of (coarse, quantitative) humoral nature, carrying a message on the density of loss-of-self loci in the tumour cell population. The same function $\chi(t,y)$ with the same localisation kernel $\omega(x,y)$ will be used for the activation of NK-lymphocytes.
\item In the second equation of \eqref{ID} for the competent NK- and T-lymphocytes $\ell(t,y)$, the sensitivity function $\nu(y)$ of the anti-tumour aggressiveness phenotype $y$ represents the weakening of both categories of lymphocytes (immunotolerance) induced by PD-1 ligands, this sensitivity function multiplying the total mass of tumour cells $\rho(t)$; note that it is also assumed to be decreased by {\it ICI}s present at the denominator. As the function $\nu$ stands for a sensitivity factor in lymphocytes to the weakening reaction molecules (in this model, mainly PD-L1) emitted by tumour cells or produced in the tumour microenvironment, it will be chosen to be a positive, decreasing function of $y$, which in this case reflects the fact that cells in the phenotypic state $y = 1$ are fully aggressive on contact with tumour cells and, for cells in phenotypic states other than the most aggressive one, the inhibition term induced by the tumour cells decreases with the drug dose.
\item The population of NK-lymphocytes and na\"ive T-lymphocytes residing in the lymphoid organs is supposed to be regulated in a logistic way by a logistic term $k_1p(t,y)$, where the parameter $k_1$ stands for the natural death rate of the population of lymphocytes imposed by carrying capacity constraints (e.g., limited availability of space and resources in lymphoid organs).
\item The input of external control targeting immune checkpoints inhibitors is represented by the function $ICI(t)$ that enhances anti-tumour CD8+ T-lymphocyte and NK-lymphocyte responses by boosting the exhausted immune cells, which helps them to respond strongly to the presence of the tumour, by ``weakening the weakening'' immunotolerance induced by the tumour cells. We assume that 
\begin{equation}
0\leq ICI(t)\leq ICI^{max}. 
\end{equation}
for some maximum tolerated dose $ ICI^{max}$.
The factor $\textstyle \frac{1}{1+h ICI(t)}$, with $h>0$, tunes the decrease in the immunotolerance  rate, decrease due to the immune checkpoints inhibitors therapy. We note that fine details of clinical administration protocols are not meant to be described here. We also mention that {\it ICI} is a quantitative dose function, and is APC-independent.  
 
\end{itemize}
\subsection{An adaptive cell population dynamic model}
\label{subsec_model}
To describe tumour-immune interactions, we consider three different cell population densities: 
\begin{itemize}
\item a heterogeneous cancer cell population $n(t,x)$ with continuous aggressiveness (or malignancy) trait $x\in[0,1]$ linked to their stemness (i.e., their ability to de-differentiate, allowing them to re-differentiate with adapted phenotypes); they are endowed with a natural (logistic and nonlocal) death term $d(x)\rho(t)$ representing within cell population competition for space and d nutrients, and an added death term due to anti-tumour immune cell predation $\mu(x)\varphi(t,x)$, where $\mu(x)$ $\mu(x)$ and $\varphi(t,x)$ are the sensitivity function and the immune predation function described above, with the target localisation kernel for T-lymphocytes included in the function $\Psi_{\lambda}$  for $\lambda\neq0$ .
\item a heterogeneous population of mixed competent T-lymphocytes and NK-lymphocytes $\ell(t,y)$ endowed with continuous anti-tumour aggressiveness trait $y$ ranging from $0$ (exhausted) to $1$ (highly aggressive) interacting with cancer cells $n(t,x)$ at the tumour site. Here, function $\nu(y)$ tunes the immunoediting function from tumour cells, that weakens the aggressivity of NK- and T-lymphocytes by PD-L1 on PD1 receptors on both lymphocyte populations.  
\item a heterogeneous population of na\"ive T-lymphocytes and inactive NK-lymphocytes $p(t,y)$, either resident and present at the tumour site (for NK-cells, particularly activated by their sensing lack of MHC-I surface antigens in tumour cells, so-called ``loss-of-self''), or present in distant lymphoid organs, informed there of the presence of tumour cells of malignancy phenotype $x$ by patrolling NK-lymphocytes - or by humoral messages - for inactive NK-lymphocytes, and for na\"ive T-lymphocytes by \textbf{APCs} (antigen-presenting cells, here represented by a weighted integral of the cancer cell population involving the localisation kernel $\omega(x,y$). Both ``na\"ive'' cell populations are represented by the lumped (i.e., gathering NK- and T-lymphocytes in a population of still inactive immune cells) population density $p(t,y)$. In our asymptotic analysis and in simulations, we consider separately the three cases: innate, adaptive, and a combination of the two immune responses. 
\end{itemize}
Our model is given by the following system of integro-differential equations (IDEs):
\begin{equation}
\label{ID}
\left\{
\begin{array}{ll}
\displaystyle\frac{\partial n}{\partial t}(t,x)=\left[r(x) -d(x) \rho(t)-\mu(x)\varphi(t,x)\right]n(t,x), \quad \hbox{for} \quad t>0, x\in [0,1], \\[0.3cm]
\displaystyle\frac{\partial \ell}{\partial t}(t,y)=p(t,y)-\left(\frac{\nu(y)\rho(t)}{1+hICI(t)}+k_1\right)\ell(t,y), \quad \hbox{for} \quad t>0, y\in [0,1],\\[0.3cm]
\displaystyle\frac{\partial p}{\partial t}(t,y)=\chi(t,y)p(t,y)-k_2p^{2}(t,y),\quad \hbox{for} \quad t>0, y\in [0,1].
\end{array}%
\right.
\end{equation}
with total mass of cancer cells at time $t$
\begin{equation}
\rho(t):=\displaystyle\int_{0}^{1}n(t,x)\,dx.
\end{equation}
The initial value function $n(0,x)$ is chosen to represent the assumed initial malignancy of the tumour, and in the same way, the initial value functions $\ell(0,y)$ and $p(0,y)$ will be chosen to represent the initial host's immune response.\\
\mbox{}\\
Having this model in mind, our goals in the present study are 
\begin{itemize}
\item to analyse the asymptotic properties of the model, as we want to understand how the interaction between tumour cells and T cells leads to the selection (or not) of some traits, which are considered as dominant traits by the environment; 
\item to numerically investigate if and how our model captures the three $E$s of immunoediting, i.e., eradication, equilibrium and escape.
\end{itemize}

\subsection{Comparison with an ODE-reduced system}
In order to exploit useful ideas to guide our study of the dynamics of the above integro-differential system, we mention that a simplified version of \eqref{ide}, reduced to an ODE system, has been analysed in~\cite{Kaidetal}. Assuming that all functions are constant in $x$ and $y$, and denoting\begin{equation}
\sigma(t):=\int_{0}^{1}\ell(t,y)\,dy, \quad \hbox{and}\quad \gamma(t):=\int_{0}^{1}p(t,y)\,dy,
\end{equation} the system \eqref{ID} boils down to the dynamics of $t \mapsto (\rho(t),\sigma(t),\gamma(t))$, which after integration solves the following ODE system: 
\begin{equation}
\label{odesys}
\left\{
\begin{array}{ll}
\displaystyle\frac{d\rho(t)}{dt}=\left[r-d\rho(t)- \mu \sigma(t)\right]\rho(t), & \\[0.3cm]
\displaystyle\frac{d\sigma(t)}{dt}=\gamma(t)-\left(k_1+\nu\rho(t)\right)\sigma(t),& \\[0.3cm]
\displaystyle\frac{d\gamma(t)}{dt}=\gamma(t)\left(\omega \rho(t)-k_2\gamma(t)\right).
\end{array}%
\right.
\end{equation} 
The mathematical analysis of these equations has been performed in~\cite{Kaidetal}, in the particular case where $k_1=\nu$, $\mu = \omega = 1$. The existence  of the steady states has been characterised and analysed with respect to their local asymptotic stability. Regions of the parameter space have also been identified, in which a Hopf bifurcation exists. The ODE system~\eqref{odesys} reproduces a tumour equilibrium (second situation of the immunoediting process), which corresponds either to a stable steady state or to a stable limit cycle, characterised by a sustained periodic behaviour of alternating growth and decay (without extinction) of both tumour and immune T cells. For particular choices of initial conditions, the ODE model also captures either tumour eradication or tumour immune escape.
\begin{figure}[H]
	\centering{
		\includegraphics[height=6cm]{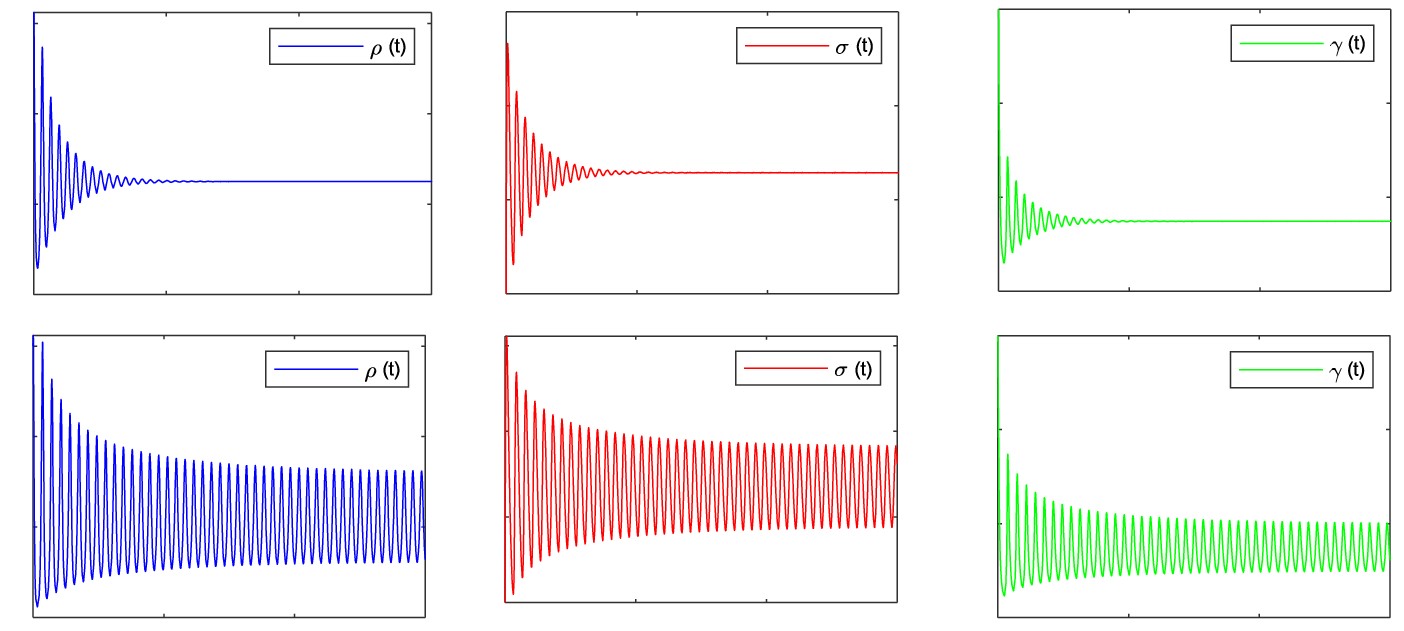}
		\caption{\label{odefig}{\small{{\bf ODE reduction} (adapted from~\cite{Kaidetal}): Plots displaying the time evolution of total masses $\rho(t)$ (left panel), activated NK-cells and competent T-cells $\sigma(t)$ (central panel), and na\"ive T-cells or inactive NLK-cells $ \gamma(t)$ (right panel) as defined by system (\ref{odesys}) in two different cases: stationary and periodic solutions~\cite{Kaidetal}}. {\bf{Upper row}}. The solution shows stability of the interior equilibrium $(0.6257,1.1436,0.746)$, for $k = 0.8514$. {\bf{Lower row.}} The solution shows instability of the interior equilibrium $(0.5204,1.1699,0.7115)$ with limit cycle (Hopf bifurcation), for $k=0.7314$. For all plots, $r=1.3, d=0.25$, $\nu=0.4$, and initial conditions are $(\rho_0,\sigma_0, \gamma_0) = (1.5,0.5,3) $.}}}
\end{figure}

\section{Asymptotic analysis}\label{sec4}

\subsection{Asymptotics in the absence of treatment: innate, non-adaptive response}
We study the asymptotic properties of the system (\ref{ID}) in the absence of treatment, i.e., with $ICI(t) = 0$. Of course, upon changing the function $\nu$, our study also encompasses the case where the dose $ICI$ is taken to be constant with time.\\
The evolution of the population densities is then governed by the following integro-differential system:
\begin{equation}
\label{ide}
\left\{
\begin{array}{ll}
\frac{\partial n}{\partial t}(t,x)=\left[r(x)-d(x)\rho(t) -\mu(x)\varphi(t)\right]n(t,x), \quad \hbox{for} \quad t>0, \; x\in [0,1],& \\[0.3cm]
\frac{\partial \ell}{\partial t}(t,y)=p(t,y) -\left(\nu(y)\rho(t)+k_1\right)\ell(t,y), \quad \hbox{for} \quad t>0, \; y\in [0,1] ,&\\[0.3cm]
\frac{\partial p}{\partial t}(t,y)=\chi(t,y)p(t,y)-k_2p^2(t,y), \quad \hbox{for} \quad t>0, \; y\in [0,1],\end{array}%
\right.
\end{equation}
the above system starting from initial conditions
\begin{equation}
\label{initialeconditions}
n(0,x)=n^0(x)\geq 0, \quad \ell(0,y)=\ell^0(y)\geq 0, \quad p(0,y)= p^0(y)\geq 0.
\end{equation}
\paragraph{Main assumptions on the functions and initial conditions.}
For the remaining part of this section, we assume that the initial conditions $n^{0}$, $\ell^{0}$ and $p^{0}$ are all in $\mathcal{C}([0,1])$, and whenever  necessary, we will assume that
\begin{equation}
\label{H4}
n^{0}>0  \quad \hbox{and} \quad p^{0}>0 \quad \hbox{on} \; [0,1],
\end{equation}
and we will work with the following regularity assumptions:
\begin{equation}
\label{H3}
r,d,\mu,\psi,\nu \in \mathcal{C}([0,1]), \quad  \hbox{and} \quad \omega \in \mathcal{C}([0,1]^2),
\end{equation}
and all the above functions are assumed to be positive. In the more general adaptive case, the assumption $\psi \in \mathcal{C}([0,1])$ is replaced by $\Psi \in \mathcal{C}([0,1]^2)$. We note that all proposed functions in the introduction and those used in simulations do satisfy these regularity and positivity hypotheses.

We also stress that no monotonicity assumptions whatsoever are required for the results of this section to hold true.

The existence and uniqueness of global classical (nonnegative) solutions in $C^0([0,+\infty), L^1(0,1)^3)$ is standard and follows from using the Banach fixed point theorem, see \cite{Perthame}. Given our regularity hypotheses for initial conditions, it is then clear that functions $n(t, \cdot)$, $\ell(t,\cdot)$ and $p(t,\cdot)$ are all continuous on $[0,1]$, at all times $t \geq 0$.


\subsubsection{Asymptotics for tumour cells alone}
In the absence of immune response (for instance, assuming either that there are no competent immune cells initially, i.e., $\ell^0 = 0$, or that immune cells are inefficient in interacting with cancer cells through either $\psi = 0$ or $\mu=0$), the first equation of~\eqref{ide} boils down to a standard logistic integro-differential model, namely
\begin{equation}
\label{singleide}
\left\{
\begin{array}{ll}
\frac{\partial n}{\partial t}(t,x)=\left[r(x)-d(x)\rho(t)\right]n(t,x), \quad 
n(t=0,x)=n^{0}(x)\geq 0,&\\[0.3cm]
\rho(t)=\int_{0}^{1}n(t,x)\,dx. 
\end{array}%
\right.
\end{equation}
The asymptotic behaviour of this equation is well known~\cite{Pouchol-Lorenzi, Perthame, P-al}. For any positive continuous initial condition $n^{0}$, the total population of tumour cells $\rho(t)$ converges to $\rho^\star:=\textstyle  \max (\frac{r}{d})$ as $t\to +\infty$.\\ This asymptotic cell population mass, which is its maximal value, is readily interpreted, as for all logistic models of tumour growth, as the tumour {\it carrying capacity}. Furthermore, the density $n(t,\cdot)$ viewed as a Radon measure supported on $[0,1]$ concentrates on the set 
\begin{equation}
A:=\left\{x\in[0,1],\;  r(x)-d(x) \rho^\star=0\right\} = \arg \max_{x \in [0,1]}\frac{r(x)}{d(x)}
\end{equation}
as $t\to +\infty$. If $A$ is reduced to a singleton $x^{\star}$, then in particular $n(t,\cdot) \rightharpoonup \rho^\star\delta_{x^\star}$
as $t\to +\infty$ in~$\mathcal{M}([0,1])$.

\subsubsection{{\it A priori} bounds}
We first indicate the derivation of an upper bound for $\rho$. Integrating the first equation of system \eqref{ide} with respect to $x$, we find using $\varphi \geq 0$:
\begin{align*}
\frac{d\rho}{dt}
&=\int_{0}^{1}\left[r(x)-d(x)\rho-\mu(x)\varphi(t)\right]n(t,x)\,dx 
\leq \displaystyle\max_{x\in [0,1]}\left(r(x) - d(x)\rho\right) \rho.
\end{align*} 
The right-hand side is negative as soon as $\displaystyle \max_{x\in [0,1]}\left(r(x) -d(x) \rho\right)< 0$, i.e., as soon as $\rho >  \textstyle \max \frac{r}{d}$.
Hence
\begin{equation}
\label{cond1}
\varlimsup_{t\to +\infty}
\rho(t) \leq  \max_{x\in [0,1]}\frac{r(x)}{d(x)} = \rho^\star. 
\end{equation}
Let us fix $y\in[0,1]$. 
Denoting $\omega^M(y):= \max_{\{x\in[0,1]\}} \omega(x,y)$, the previous bound yields
\begin{equation}
\label{cond2}
\forall y\in [0,1], \quad \varlimsup_{t\to +\infty}\chi(t,y)\leq  \omega^{M}(y)\overline{\rho}.
\end{equation}
Using the equation satisfied by $t \mapsto p(t,y)$, we find
\begin{equation}
\label{cond3}
 \varlimsup_{t\to +\infty} p(t,y)\leq \overline{p}(y):= \frac{ \omega^{M}(y) \rho^\star}{k_2}.
\end{equation}
Using the same arguments, one can prove that the population density $\ell$ is bounded from above. Indeed, 
\begin{align}
 \frac{d}{dt}  \ell (t,y)
= p(t,y)-\left(\nu(y)\rho(t)+k_1\right)\ell(t,y)
\leq p(t,y)-k_1\ell(t,y)
\end{align}
and we are led to 
\begin{equation}
\label{cond4}
\varlimsup_{t\to +\infty} \ell(t,y)\leq \overline{\ell}(y):=\frac{\overline{p}(y)}{k_1}.
\end{equation}
As a result, we obtain 
\begin{equation}
\varlimsup_{t\to +\infty}\varphi(t)\leq \overline{\varphi}:=\int_{0}^{1}\psi(y)\overline{\ell}(y)\,dy.
\end{equation}
Finally, we may argue as above for a lower bound for $\rho$ (on top of nonnegativity $\rho \geq 0$). Indeed, from 
\begin{align}
\frac{d\rho}{dt}
\geq \min_{x \in [0,1]} \left(r(x) -d(x)\rho -\mu(x)\varphi\right)\rho,
\end{align}
it follows that 
\begin{align}
\varliminf_{t\to +\infty} \rho(t)\geq \min_{x \in [0,1]} \left(\frac{r(x)-\mu(x)\overline{\varphi}}{d(x)}\right).
\end{align}
We accordingly consider an assumption leading to non-extinction, given by
\begin{equation}
\label{H}
\min_{x \in [0,1]} (r(x)-\mu(x)\overline{\varphi}) > 0,
\end{equation}
which depends only on the parameters $r$, $d$, $\mu$, $\psi$, $\omega$, $k_1$ and $k_2$. If this assumption is satisfied, eradication is impossible. 

\subsubsection{Asymptotics for the complete model, innate, non-adaptive response}\label{sec431}
This section is devoted to analysing the asymptotic behaviour of the model \eqref{ide} in the non-adaptive case, particularly represented by NK-lymphocytes rather than by T-lymphocytes, where $\varphi$ does not depend on $x$. 

As already mentioned in Section \ref{sec3}, assuming all functions to be constant, the IDE system has the ODE~\eqref{odesys} as a particular case. For that ODE, it has been proved that all three behaviours can occur: convergence to a (unique) trivial stable point (extinction or escape), convergence to a (unique) non-trivial stable point (equilibrium) and convergence to a limit cycle.  The existence of such periodic solutions means that there is no hope of deriving any unconditional result of convergence to steady states for the IDE model. 

In what follows, we prove a partial result, which makes the strong {\it a priori} assumption that $n$ converges. We then establish that the limit either equals $0$ or can precisely be characterised, see Theorem~\ref{thm_pricnipal}.
\begin{lemma}
\label{prop1}
Suppose that the density $n$ weakly converges in $\mathcal{M}([0,1])$, and denote $n^{\infty}$ the limit measure. Setting $\rho^{\infty}:= \int_{0}^{1}dn^{\infty}(x)$, and under the assumptions~\eqref{H4}-~\eqref{H3}, both densities $\ell$ and $p$ converge respectively to $\ell^{\infty}, p^{\infty} \in C^0([0,1])$ given by
	\begin{equation}
	\label{limit_pointwise}
	\begin{array}{ll}
	\ell^{\infty}(y)=\frac{p^{\infty}(y)}{\nu(y) \rho^{\infty}+k_1},& \\[0.3cm]
	p^{\infty}(y)=\frac{1}{k_2}\int_{0}^{1}\omega(x,y) \, dn^{\infty}(x).
	\end{array}%
	\end{equation} 	
\end{lemma}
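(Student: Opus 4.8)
The plan is to exploit the (almost) triangular structure of~\eqref{ide}: the assumed convergence of $n$ feeds the $p$-equation through $\chi$ and the $\ell$-equation through $\rho$, while $p$ in turn feeds the $\ell$-equation, so the system can be resolved ``from the bottom up''. First I would turn the weak convergence $n(t,\cdot)\rightharpoonup n^{\infty}$ in $\mathcal{M}([0,1])$ into pointwise convergence of the relevant averages: testing against the constant function $1\in C([0,1])$ gives $\rho(t)\to\rho^{\infty}$, and testing against $\omega(\cdot,y)\in C([0,1])$ (for each fixed $y$, using \eqref{H3}) gives $\chi(t,y)=\int_{0}^{1}\omega(x,y)n(t,x)\,dx\to\chi^{\infty}(y):=\int_{0}^{1}\omega(x,y)\,dn^{\infty}(x)$, so that $\chi^{\infty}(y)=k_2\,p^{\infty}(y)$ with $p^{\infty}$ as in the statement. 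The a priori bounds \eqref{cond1}--\eqref{cond4} of the previous subsubsection moreover guarantee that $\rho$, $\chi(\cdot,y)$, $p(\cdot,y)$ and $\ell(\cdot,y)$ are bounded on $[0,+\infty)$, which is all the compactness I will need.

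Next I would fix $y\in[0,1]$ and read the third equation of~\eqref{ide} as a scalar logistic ODE in $t$ with asymptotically constant coefficient, $\dot p = p\,(\chi(t,y)-k_2 p)$. Since $p^{0}>0$ by~\eqref{H4}, the solution $p(\cdot,y)$ stays positive, and the Bernoulli substitution $u:=1/p$ linearises the equation into $\dot u = k_2 - \chi(t,y)\,u$. I would isolate a short auxiliary lemma: if $\dot u = g(t)-a(t)u$ with $a,g$ bounded, $a(t)\geq 0$, $a(t)\to a_{\infty}$ and $g(t)\to g_{\infty}\geq 0$, then $u(t)\to g_{\infty}/a_{\infty}$ when $a_{\infty}>0$, and $u(t)\to+\infty$ when $a_{\infty}=0<g_{\infty}$; this is a routine Duhamel-plus-Grönwall estimate (split the Duhamel integral at a large time $T$ and use $e^{-\int_{\tau}^{t}a}$). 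Applied with $g\equiv k_2$, $a=\chi(\cdot,y)$, $a_{\infty}=\chi^{\infty}(y)$, it yields $u(t)\to k_2/\chi^{\infty}(y)$, i.e. $p(t,y)\to\chi^{\infty}(y)/k_2=p^{\infty}(y)$; and in the degenerate case $\chi^{\infty}(y)=0$ the same estimate (from $\dot u\geq k_2-\varepsilon u$ eventually, for every $\varepsilon>0$) gives $u(t)\to+\infty$, that is $p(t,y)\to 0=p^{\infty}(y)$. So $p(t,y)\to p^{\infty}(y)$ in all cases.

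With $\rho(t)\to\rho^{\infty}$ and $p(t,y)\to p^{\infty}(y)$ now known, the second equation $\dot\ell = p(t,y)-\bigl(\nu(y)\rho(t)+k_1\bigr)\ell$ is already linear, with coefficient $a(t,y):=\nu(y)\rho(t)+k_1\geq k_1>0$ converging to $\nu(y)\rho^{\infty}+k_1$ and source converging to $p^{\infty}(y)$; the same auxiliary lemma (now in the easy non-degenerate regime) gives $\ell(t,y)\to p^{\infty}(y)/(\nu(y)\rho^{\infty}+k_1)=\ell^{\infty}(y)$. It remains to check the limits are continuous: $p^{\infty}(y)=\tfrac1{k_2}\int_{0}^{1}\omega(x,y)\,dn^{\infty}(x)$ is continuous in $y$ since $\omega$ is uniformly continuous on $[0,1]^2$ and $n^{\infty}$ has finite mass $\rho^{\infty}$, so that $|p^{\infty}(y_1)-p^{\infty}(y_2)|\leq \tfrac{\rho^{\infty}}{k_2}\sup_{x}|\omega(x,y_1)-\omega(x,y_2)|\to 0$; then $\ell^{\infty}=p^{\infty}/(\nu(\cdot)\rho^{\infty}+k_1)$ is continuous as a quotient whose denominator is bounded below by $k_1>0$ and whose numerator and denominator are continuous (using $\nu\in C([0,1])$).

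The only genuinely delicate point is the nonlinear $p$-equation; everything downstream is linear and standard. The Bernoulli trick removes the nonlinearity, but one must be careful not to divide by zero (hence the essential use of $p^{0}>0$) and to treat the degenerate case $\chi^{\infty}(y)=0$, where $1/p$ blows up, separately. A secondary check, should one wish to upgrade the pointwise convergences of $\ell(t,\cdot)$ and $p(t,\cdot)$ to uniform convergence on $[0,1]$ — not needed for the statement as written — is that $\chi(t,\cdot)\to\chi^{\infty}$ uniformly, which follows from weak convergence of $n(t,\cdot)$ together with relative compactness of $\{\omega(\cdot,y)\}_{y\in[0,1]}$ in $C([0,1])$ (Arzelà--Ascoli), after which all the ODE estimates above are uniform in $y$ because the relevant constants ($k_1$, the bounds from \eqref{cond1}--\eqref{cond4}, the modulus of continuity of $\omega$) are.
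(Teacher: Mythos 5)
Your proposal is correct and follows essentially the same route as the paper: weak convergence of $n$ gives convergence of $\rho(t)$ and of $\chi(t,y)$ for each $y$, after which $p(\cdot,y)$ and then $\ell(\cdot,y)$ are treated as scalar ODEs with asymptotically constant coefficients whose solutions converge to the equilibrium of the limit equation. The only difference is technical: where the paper sandwiches the logistic $p$-equation between autonomous logistic sub-/super-solutions with rates $\bar{\chi}(y)\pm\varepsilon$, you linearise via the Bernoulli substitution $u=1/p$ and conclude by a Duhamel estimate, explicitly covering the degenerate case $\chi^{\infty}(y)=0$ that the paper handles implicitly through its $\limsup$/$\liminf$ bounds and nonnegativity.
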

\begin{proof} 
We let $y \in [0,1]$ be fixed. First remark that 
$\chi(t,y)$ converges to  $\bar{\chi}(y)$ given by
\begin{equation}
\bar{\chi}(y):=\int_{0}^{1}\omega(x,y) \,dn^{\infty}(x). 
\end{equation}
Hence $p(\cdot,y)$ satisfies a non-autonomous logistic ODE, given by 
	\begin{equation}
	\frac{dp(t,y)}{dt}=\left[\chi(t,y)-k_2p(t,y)\right]p(t,y).
	\end{equation}
For any given $\varepsilon>0$ and $t$ large enough (say $t \geq t_0$) such that $\chi(t,y)\leq\bar{\chi}(y)+\varepsilon$, we can write
	\begin{equation}
	\frac{dp(t,y)}{dt}\leq\left[\bar{\chi}(y)+\varepsilon-k_2p(t,y)\right]p(t,y),
	\end{equation}
$p$ is thus a sub-solution of the equation
	\begin{equation}
	\frac{du}{dt}(t)=\left[\bar{\chi}(y)+\varepsilon-k_2u(t)\right]u(t),
	\end{equation}
with initial condition chosen to be $u(t_0)=p(t_0,y)$. The solution of the latter logistic autonomous equation converges to $\textstyle \textstyle\frac{\bar{\chi}(y)+\varepsilon}{k_2}$ as $t\to +\infty$, since $p(t_0, y)>0$ by the assumption~\eqref{H4}. We conclude that 
\begin{equation}
\label{ineq1}
\forall \varepsilon>0, \quad \varlimsup_{t\to +\infty} p(t,y)\leq \lim\limits_{t\to +\infty}u(t)=\frac{\bar{\chi}(y)+\varepsilon}{k_2}. 
\end{equation}
Therefore, we may pass to the limit $\varepsilon\to 0$ in inequality \eqref{ineq1} to obtain  
\begin{equation}
\varlimsup_{t\to +\infty} p(t,y)\leq \frac{\bar{\chi}(y)}{k_2}. 
\end{equation}
Using a similar argument, we can obtain a bound from below, and then prove that
\begin{equation}
\forall y\in [0,1], \quad\lim_{t\to +\infty} p(t,y)= \frac{\bar{\chi}(y)}{k_2}=\frac{1}{k_2}\int_{0}^{1}\, \omega(x,y)\, dn^{\infty}(x)=p^{\infty}(y).
\end{equation}
Turning to the limit for $\ell$, we fix $y$ in $[0,1]$. Letting $L_y(t):= \ell(t,y)$, we have 
\begin{equation}
\frac{dL_{y}(t)}{dt}=A_{y}(t)-B_{y}(t)L_{y}(t),
\end{equation}
which is a non-autonomous linear differential equation, with 
\begin{equation}
\left\{
\begin{array}{ll}
\lim\limits_{t\to +\infty}A_{y}(t)=\lim\limits_{t\to \infty}p(t,y)=p^{\infty}(y)=:\bar{A}_{y},& \\[0.3cm]
\lim\limits_{t\to +\infty}\left(\nu(y)\rho(t)+k_1\right)=\nu(y)\rho^{\infty}+k_1=:\bar{B}_{y}. 
\end{array}%
\right.
\end{equation} 
For $\eps>0$ small enough (such that $\eps< \overline{B}_y$) and $t$ large enough (say $t \geq t_0$) such that $A_{y}(t)\leq\bar{A}_{y}+\eps$ and $B_{y}(t)\geq\bar{B}_{y}-\eps$, we can write
\begin{equation}
\frac{dL_{y}}{dt}\leq\left(\bar{A}_{y}+\eps\right)-\left(\bar{B}_{y}-\eps\right)L_{y},
\end{equation}
$L_{y}$ is thus a sub-solution of the autonomous equation given by
\begin{equation}
\frac{dv}{dt}=\left(\bar{A}_{y}+\eps\right)-\left(\bar{B}_{y}-\eps\right)v,
\end{equation}
with $v(t_0)=L_y(t_0)$, hence
\begin{equation}
\label{ineq}
\forall \eps>0, \quad \varlimsup_{t\to +\infty} L_{y}(t)\leq \lim\limits_{t\to +\infty}v(t)=\frac{\bar{A}_{y}+\eps}{\bar{B}_{y}-\eps}. 
\end{equation}
We then let  $\eps$ go to  $0$ to get
\begin{equation}
\forall y\in[0,1], \quad \varlimsup_{t\to +\infty} L_{y}(t)\leq \frac{\bar{A}_{y}}{\bar{B}_{y}}=\frac{p^{\infty}(y)}{\nu(y) \rho^{\infty}+k_1}. 
\end{equation}
Arguing in a similar manner to get a lower bound, we find
\begin{equation}
\forall y\in [0,1], \quad\lim_{t\to +\infty} \ell(t,y)= \frac{p^{\infty}(y)}{k_1+\nu(y)\rho^{\infty}}=\ell^{\infty}(y).
\end{equation}
\end{proof}
Let us now explain how to determine the only possible limits for the system, still making the strong {\it a priori} assumption that $n(t, \cdot)$ converges. We shall need a technical (but rather weak) assumption, namely
\begin{equation}
\label{H2}
\forall 0<\rho \leq \overline{\rho}, \;\forall 0<\varphi\leq \overline{\varphi}, \quad \arg\max_{x\in [0,1]}\left(r(x)-d(x)\rho-\mu(x)\varphi\right)=:\{x(\rho,\varphi)\}.
\end{equation}

\begin{remark}
One sufficient but more workable condition to have~\eqref{H2} is for the function $r-d \rho - \mu \varphi$ to be strictly concave over $[0,1]$, for all $0<\rho \leq \overline{\rho}$, $0<\varphi\leq \overline{\varphi}$, which assuming that all functions are smooth is verified as soon as $r'' - \rho d'' - \varphi \mu''<0$ on $[0,1]$ for all such values. 

We also note that~\eqref{H2} can be restricted to values $0<\rho \leq \overline{\rho}$, $0<\varphi\leq \overline{\varphi}$ such that the function $x \mapsto r(x)-d(x)\rho-\mu(x)\varphi$ has maximum zero, as the proof below shows.
\end{remark}
\begin{theorem}
\label{thm_pricnipal}
Suppose that the density $n$ weakly converges in $\mathcal{M}([0,1])$, and  denote $n^{\infty}$ the limit measure. Under the assumptions \eqref{H4}-\eqref{H3}-\eqref{H2}, then either $n^{\infty} = 0$ or $n^\infty$ is of the form  
\begin{equation*}
n^{\infty}=\rho^{\infty}\delta_{x^{\infty}},
\end{equation*}
where $x^{\infty}=x(\rho^{\infty},\varphi^{\infty})$ and $(\rho^{\infty},\varphi^{\infty})$ solves the following system over $(\rho,\varphi) \in \mathbb{R}^2$
\begin{equation}
\label{asymptotic}
\left\{
\begin{array}{ll}
\rho=\displaystyle\max_{x\in \lbrack 0,1]}\left(\frac{r(x)-\mu(x)\varphi}{d(x)}\right),& \\[0.3cm]
\varphi =\displaystyle \frac{\rho}{k_2}\int_{0}^{1}\frac{\psi(y)\,\omega(x(\rho,\varphi),y)}{\nu(y)\rho+k_1}\, dy.
\end{array}%
\right.
\end{equation} 
\end{theorem}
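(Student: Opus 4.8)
The plan is to exploit the convergence of $n(t,\cdot)$ together with Lemma~\ref{prop1} to pass to the limit in the first equation of~\eqref{ide}, and then to argue that the limiting measure $n^\infty$ must be concentrated at a single point. First I would observe that, by Lemma~\ref{prop1}, $\ell(t,\cdot)\to\ell^\infty$ uniformly on $[0,1]$, hence $\varphi(t)=\int_0^1\psi(y)\ell(t,y)\,dy$ converges to a limit $\varphi^\infty:=\int_0^1\psi(y)\ell^\infty(y)\,dy$; by~\eqref{cond1}--\eqref{cond4} this limit satisfies $0\le\varphi^\infty\le\overline\varphi$, and $0\le\rho^\infty\le\overline\rho$. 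The growth rate of $n$ in the first equation, $R(t,x):=r(x)-d(x)\rho(t)-\mu(x)\varphi(t)$, therefore converges uniformly in $x$ to $R^\infty(x):=r(x)-d(x)\rho^\infty-\mu(x)\varphi^\infty$.

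Next I would show that $\max_{x\in[0,1]}R^\infty(x)=0$ provided $n^\infty\ne 0$. The upper bound $\max_x R^\infty(x)\le 0$ follows from the standard argument: if the maximum were strictly positive, then on a neighbourhood of its argmax the growth rate would eventually be bounded below by a positive constant, forcing $\rho(t)\to+\infty$ and contradicting~\eqref{cond1}. For the reverse inequality, suppose $\max_x R^\infty(x)=-2\delta<0$. Then for $t$ large, $R(t,x)\le-\delta$ uniformly in $x$, so $\frac{d\rho}{dt}\le-\delta\rho$, giving $\rho(t)\to 0$, i.e. $\rho^\infty=0$; and since $n^\infty$ is a nonnegative measure with total mass $\rho^\infty=0$, we get $n^\infty=0$, a contradiction. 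Hence if $n^\infty\ne 0$ then $\max_{x\in[0,1]}R^\infty(x)=0$, which is exactly the statement $\rho^\infty=\max_x\frac{r(x)-\mu(x)\varphi^\infty}{d(x)}$, the first equation of~\eqref{asymptotic}. Note also that $\rho^\infty>0$ in this case, so~\eqref{H2} applies with $\rho=\rho^\infty$, $\varphi=\varphi^\infty$ and yields a unique maximiser $x^\infty=x(\rho^\infty,\varphi^\infty)$.

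The core step is then to prove that the support of $n^\infty$ is contained in the set $A^\infty:=\{x:R^\infty(x)=0\}$, which by~\eqref{H2} is the singleton $\{x^\infty\}$; together with total mass $\rho^\infty$ this forces $n^\infty=\rho^\infty\delta_{x^\infty}$. To see $\mathrm{supp}\,n^\infty\subset A^\infty$, fix a closed set $K\subset[0,1]\setminus A^\infty$; on $K$ we have $R^\infty(x)\le-\eta<0$ for some $\eta>0$, hence $R(t,x)\le-\eta/2$ on $K$ for $t\ge t_0$, so $n(t,x)\le n(t_0,x)e^{-(\eta/2)(t-t_0)}$ for $x\in K$, which gives $\int_K n(t,x)\,dx\to 0$; passing to the weak limit, $n^\infty(K)=0$. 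Taking an exhaustion of the open set $[0,1]\setminus A^\infty$ by such compacts shows $n^\infty([0,1]\setminus A^\infty)=0$, as desired. Finally, substituting $n^\infty=\rho^\infty\delta_{x^\infty}$ into the formula for $p^\infty$ and then $\ell^\infty$ from Lemma~\ref{prop1} gives $\ell^\infty(y)=\frac{\omega(x^\infty,y)\rho^\infty}{k_2(\nu(y)\rho^\infty+k_1)}$, and plugging this into $\varphi^\infty=\int_0^1\psi(y)\ell^\infty(y)\,dy$ yields precisely the second equation of~\eqref{asymptotic}.

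The main obstacle I anticipate is the bookkeeping needed to make the "concentration" argument rigorous at the level of Radon measures with only weak-$*$ convergence available: one must be careful that pointwise decay of $n(t,x)$ on compact sets away from $A^\infty$ does transfer to the limit measure (which is why testing against indicator-type functions via monotone/dominated convergence, or against continuous bump functions supported in $[0,1]\setminus A^\infty$, is the clean route), and that the uniform convergence $R(t,\cdot)\to R^\infty(\cdot)$ genuinely holds — this last point rests on the uniform convergence of $\ell(t,\cdot)$ from Lemma~\ref{prop1}, itself relying on the Dini-type/sub- and super-solution comparisons already carried out there. Everything else is a routine combination of Grönwall-type estimates and the a priori bounds~\eqref{cond1}--\eqref{cond4}.
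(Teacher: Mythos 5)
Your proposal is correct and follows essentially the same route as the paper: convergence of $\varphi(t)$, sign analysis of the limiting fitness $r-d\rho^\infty-\mu\varphi^\infty$ (strictly positive somewhere forces blow-up of $\rho$ via the positivity in \eqref{H4}, strictly negative everywhere forces $n^\infty=0$), hence the maximum is zero, uniqueness of the maximiser by \eqref{H2}, concentration of $n$ at $x^\infty$, and substitution into the formulae of Lemma~\ref{prop1} to obtain \eqref{asymptotic}. The only slip is attributing uniform convergence of $\ell(t,\cdot)$ to Lemma~\ref{prop1}, which gives pointwise convergence only; the convergence of $\varphi(t)$ is instead obtained by dominated convergence using the bound \eqref{cond4} (as in the paper), and the uniform-in-$x$ convergence of the fitness needs no such input since it follows from the convergence of the scalars $\rho(t)$ and $\varphi(t)$ alone.
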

\begin{remark}
If one makes the additional assumption \eqref{H}, $\rho$ is bounded away from $0$ and hence we must have $n^\infty \neq 0$. In other words, the only possible limits are of the form given by the above result if~\eqref{H} holds. 
\end{remark}
\begin{proof}
We assume that $n^\infty \neq 0$. 
According to Lemma~\ref{prop1}, both $t\mapsto \ell(t,\cdot)$ and  $t\mapsto p(t,\cdot)$ converge pointwise to $\ell^\infty$ and $p^\infty$ implicitly given by formulae~\eqref{limit_pointwise}.

Let us justify that $\varphi$ converges. The bound~\eqref{cond4} shows that the function $(t,y) \mapsto  \psi(y) \ell (t,y)$ is dominated by the continuous function $y \mapsto \psi(y) \overline{\ell}(y)$, hence by the dominated convergence theorem, we have 
\begin{equation}
\label{lim_phi}
\lim\limits_{t\to +\infty}\varphi(t)=\varphi^{\infty}:=\int_{0}^{1} \psi(y) \ell^\infty(y) \,dy =\frac{1}{k_2}	\int_{0}^{1}\left[\frac{\psi(y)}{\nu(y)\rho^{\infty}+k_1}\int_{0}^{1}\omega(x,y)\, dn^{\infty}(x)\right] \,dy. 
\end{equation}  
For a fixed $x \in [0,1]$, $t \mapsto n(t, x)$ solves an exponential ODE (\textit{i.e}, of the form $z'(t) = a(t)z(t)$), whose time-dependent rate asymptotically approaches $r(x)-d(x)\rho^{\infty}-\mu(x)\varphi^{\infty}$. We may hence analyse its sign as follows.
\begin{itemize}
\item[i)] If  $r(x_0)-d(x_0)\rho^{\infty}-\mu(x_0) \varphi^{\infty}>0$ for some $x_0\in [0,1]$, then by continuity there exists an nontrivial interval $I\subset [0,1]$ containing $x_0$, along which $r(x)-d(x)\rho^{\infty}-\mu(x)\varphi^{\infty} \geq 2 \varepsilon$ for $\eps$ small enough. Since the convergence of $r(x)-d(x)\rho(t)-\mu(x)\varphi(t)$ towards $r(x)-d(x)\rho^{\infty}-\mu(x)\varphi^{\infty}$ as $t \to \infty$ is uniform with respect to $x \in [0,1]$, there exists $t_0>0$ such that $r(x)-d(x)\rho(t)-\mu(x)\varphi(t) \geq \varepsilon$ for all $t\geq t_0$ and $x\in I$.  Writing the solution \eqref{ide} in implicit form gives for all $t \geq 0$ 
\begin{equation*}
n(t,x)=n(t_0,x)\, e^{\int_{t_0}^{t}\left(r(x)-d(x)\rho(s)-\mu(x)\varphi(s)\right)\,ds},
\end{equation*}
which after integration over $[0,1]$ leads to
\begin{align*}
\rho(t) =\int_{0}^{1}n(t,x)\,dx \geq \int_{I}n(t_0,x) \, e^{\int_{t_0}^{t}\left(r(x)-d(x)\rho(s)-\mu(x)\varphi(s)\right)\,ds}\,dx
\geq |I| \inf_{x\in I}n(t_0,x) \, e^{\eps(t-t_0)}.
\end{align*}
with $|I|$ the Lebesgue measure of $I$.
Recalling the assumption \eqref{H4}, the continuous function $n(t_0,\cdot)$ is also positive, which shows that $\displaystyle \inf_{x\in I}n(t_0,x)>0$. Since the right-hand side goes to $+\infty$, we obtain a contradiction with the convergence of $\rho$.
\item[ii)] If $r -d\rho^{\infty}-\mu\varphi^{\infty}<0$ on the whole of $[0,1]$, one readily proves that $\rho$ converges to $0$ which is incompatible with the convergence of $\rho$ to a positive limit (since $n^\infty \neq 0$).
\end{itemize} 
The function $r-d\rho^{\infty}-\mu\varphi^{\infty}$  is thus non positive on $[0,1]$, and its maximum equals $0$. 
This is equivalent to saying that $\rho^\infty =\textstyle \max (\frac{r-\mu\varphi^\infty}{d})$.

Assumption \eqref{H2} ensures that the maximum point $x^\infty:= x(\rho^{\infty},\varphi^{\infty})$ is unique. 
Furthermore, the first statement i) further shows that $n(t, x)$ vanishes at any other point $x$ than $x \neq x^\infty$. 
We have thus proved that $n$ concentrates at $x^\infty$, hence $n^\infty = \rho^\infty \delta_{x^\infty}$.

Finally, inserting $n^\infty = \rho^\infty \delta_{x^\infty}$ into the formula~\eqref{lim_phi}, we obtain the second equation, concluding the proof.
\begin{remark}
In general, there is no close formula for the solutions of~\eqref{asymptotic}, which may not be unique. In practice, this system may be solved numerically by any method aiming at finding fixed points of the underlying mapping. Hence, assuming convergence of $n$, this theorem does provide a rather complete picture of the possible non-trivial limits the system may reach. When there exists a unique solution to~\eqref{asymptotic}, a single such limit is characterised.
\end{remark}
\end{proof}

\subsubsection{Asymptotics in the adaptive and mixed innate-adaptive response case}
We now sketch the extension of Theorem~\ref{thm_pricnipal} to the (more general) case where $\varphi$ depends on $x$. In this case, we may obtain a result similar to Theorem~\ref{thm_pricnipal}, but at the expense of an assumption stronger than~\eqref{H2} and a more intricate system solved by the stationary state. 

Indeed, keeping the same notations, we make the assumption that 
for all $0<  \rho \leq \overline{\rho}$ and for all functions $\ell \in \mathcal{C}([0,1])$ satisfying $0 \leq \ell(y)\leq \overline{\ell}(y)$ for all $y \in [0,1]$,
\begin{equation}
\label{H2-tilde}
\arg\max_{x\in [0,1]}\bigg(r(x)-d(x)\rho-\mu(x) \int_0^1 \Psi(x,y) \ell(y)\,dy  \bigg)=:\{x(\rho,\ell)\}.
\end{equation}
Following the proof of Theorem~\ref{thm_pricnipal}, one can then prove in exactly the same way:
\begin{theorem}
\label{thm_secondary}
Under the assumptions \eqref{H4}-\eqref{H3}-\eqref{H2-tilde}, supposing that $n$ converges weakly in $\mathcal{M}([0,1])$ to some $n^\infty$, then either $n^\infty = 0$ or $n^{\infty}$ is of the form  
\begin{equation*}
n^{\infty}=\rho^{\infty}\delta_{x^{\infty}},
\end{equation*}
where $x^{\infty}=x(\rho^{\infty},\ell^{\infty})$ and $(\rho^{\infty},\ell^{\infty})$ solves the following system over $(\rho,\ell) \in \mathbb{R} \times \mathcal{C}([0,1])$
\begin{equation}
\label{syst}
\left\{
\begin{array}{ll}
\rho=\displaystyle\max_{x\in \lbrack 0,1]}\left(\frac{r(x)-\mu(x)\int_0^1 \Psi(x,y) \ell(y)\,dy}{d(x)}\right),& \\[0.3cm]
\ell(y) =\displaystyle \frac{\rho}{k_2}\frac{\omega(x(\rho,\ell),y)}{\nu(y)\rho+k_1}, \quad y \in [0,1].
\end{array}%
\right.
\end{equation} 
\end{theorem}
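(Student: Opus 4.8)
The plan is to recycle the proof of Theorem~\ref{thm_pricnipal} essentially line by line, the only substantive new point being to upgrade the convergence of the nonlocal predation term to one that is \emph{uniform} in $x$. As before I would start by assuming $n^\infty \neq 0$. Since the equations for $p$ and $\ell$ in~\eqref{ide} do not involve $\varphi$ at all, Lemma~\ref{prop1} applies unchanged, so $p(t,\cdot)$ and $\ell(t,\cdot)$ converge pointwise to the continuous functions $p^\infty, \ell^\infty$ of~\eqref{limit_pointwise}; moreover~\eqref{cond4} gives $0 \leq \ell^\infty(y) \leq \overline{\ell}(y)$ for every $y \in [0,1]$, so $\ell^\infty$ is admissible in assumption~\eqref{H2-tilde}.

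Next I would prove that $\varphi(t,\cdot) \to \varphi^\infty$ \emph{uniformly} on $[0,1]$, where $\varphi^\infty(x) := \int_0^1 \Psi(x,y)\ell^\infty(y)\,dy$. Since $\Psi \in \mathcal{C}([0,1]^2)$ is bounded,
\begin{equation*}
\sup_{x \in [0,1]} \big|\varphi(t,x) - \varphi^\infty(x)\big| \leq \|\Psi\|_\infty \int_0^1 \big|\ell(t,y) - \ell^\infty(y)\big|\,dy ,
\end{equation*}
and the right-hand side tends to $0$ by dominated convergence (pointwise convergence of $\ell(t,\cdot)$, domination by the integrable function $\overline{\ell}$). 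Uniform continuity of $\Psi$ together with boundedness of $\ell^\infty$ also yields $\varphi^\infty \in \mathcal{C}([0,1])$. Hence the time-dependent rate $r(x) - d(x)\rho(t) - \mu(x)\varphi(t,x)$ of the exponential ODE solved by $t \mapsto n(t,x)$ converges, uniformly in $x \in [0,1]$, to $g(x) := r(x) - d(x)\rho^\infty - \mu(x)\varphi^\infty(x)$.

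With this uniform convergence available, the sign dichotomy of the proof of Theorem~\ref{thm_pricnipal} goes through verbatim: if $g(x_0) > 0$ for some $x_0$, continuity of $g$ produces a nontrivial interval $I$ on which $g \geq 2\eps$, so $n(t,x) \geq |I|\inf_{I} n(t_0,\cdot)\,e^{\eps(t - t_0)}$ on $I$ for $t$ large, and integrating over $I$ contradicts the convergence of $\rho$; whereas if $g < 0$ on the whole of $[0,1]$ then $\rho(t) \to 0$, contradicting $\rho^\infty > 0$. Thus $\max_{[0,1]} g = 0$, which is precisely the first equation of~\eqref{syst}.

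Finally, since $0 < \rho^\infty \leq \overline{\rho}$ and $\ell^\infty$ is admissible, assumption~\eqref{H2-tilde} forces the maximiser to be the single point $x^\infty := x(\rho^\infty, \ell^\infty)$, and case i) above shows $n(t,x) \to 0$ at every other $x$, so $n^\infty = \rho^\infty \delta_{x^\infty}$. Inserting this into~\eqref{limit_pointwise} gives $p^\infty(y) = \frac{\rho^\infty}{k_2}\omega(x^\infty,y)$ and then $\ell^\infty(y) = \frac{\rho^\infty\,\omega(x^\infty,y)}{k_2(\nu(y)\rho^\infty + k_1)}$, i.e.\ the second equation of~\eqref{syst}, which completes the argument. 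The only real obstacle beyond bookkeeping is the uniform-in-$x$ convergence of $\varphi(t,\cdot)$: it is exactly what makes the blow-up estimate on $I$ legitimate, and it is the reason the hypothesis on $\psi$ must be strengthened to $\Psi \in \mathcal{C}([0,1]^2)$ in this more general setting.
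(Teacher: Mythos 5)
Your argument is correct and coincides with the paper's intended proof: the paper simply states that Theorem~\ref{thm_secondary} follows ``in exactly the same way'' as Theorem~\ref{thm_pricnipal}, and you reproduce that argument, with the one genuinely new ingredient (the uniform-in-$x$ convergence of $\varphi(t,\cdot)$ to $\varphi^\infty$, enabled by $\Psi\in\mathcal{C}([0,1]^2)$ and dominated convergence of $\ell$) correctly identified and justified. Your additional check that $0\leq\ell^\infty\leq\overline{\ell}$ and $\rho^\infty\leq\overline{\rho}$, so that \eqref{H2-tilde} is applicable to the limit, is a welcome detail the paper leaves implicit.
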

\section{Numerical simulations}\label{sec5}
In this section, we present some numerical simulations of system~\eqref{ID}.

\subsection{Numerical approach and parameters}
We follow the numerical method given in~\cite{Pouchol_stage} and we select a discretisation of phenotype intervals $[0,1]$ consisting of $1000$ points for the computational domain of the independent variables $x$ and $y$ and let $t\in [0,T]$, unless otherwise specified, we choose the final time to be $T=100$ or $T=1000$. Time-discretisation is made with time step equal to $0.1$ except for the extensive simulations of Figure~\ref{heatmaps} where it is set equal to $1$.

The function $\Psi = \Psi_\lambda$ underlying $\varphi$ in~\eqref{eq-varphi} is chosen to be of the form~\eqref{combary}, and the kernel $\omega$ underlying~$\chi$ in~\eqref{eq-chi} is given by $\omega(x,y) = \alpha \frac{1}{s}e^{-|x-y|/s}$. Parameters $\lambda$ and $v$ (for $\Psi$), and $s$ (for $\omega$) may vary across simulations.
Unless otherwise specified, all other parameters take values as presented in Table~\ref{tab1}. 

We emphasise that parameters have been chosen arbitrarily in the absence of suitable experimental data, in order to reproduce different biological scenarios.

To define the initial density of tumour cells, we use a Gaussian profile, and a homogeneous condition for competent immune cells $\ell$, while the na\"ive immune cells $p$ are distributed over the whole interval $[0,1]$:
\begin{equation}
\label{initialdata}
\left\{
\begin{array}{ll}
n^0(x)=n(0,x)=\frac{C}{\sqrt{2\pi \sigma_0^2}}\exp(\frac{-(x-m)^2}{2e^2}), &\\ [0.3cm] \ell^0(y)=\ell(0,y)=0, &\\ [0.3cm]
p^0(y)=p(0,y)=1-y^2,
\end{array}%
\right.
\end{equation}
with $m=0.5, e=0.1$, and a normalisation constant $C>0$ chosen so that $\rho(0)=1$. Thus, we start with a total tumour cell mass equal to $1$, and the phenotype $x$ is initially concentrated at $0.5$.

We thus assume that activated NK-cells and competent T-cells $\ell(t,y)$ are absent at time $t=0$, and that the most aggressive inactive NK-cells and na\"ive T-cells $p(t,y)$ have been duly informed by circulating NK-cells and by APCs and present themselves at time $t=0$ at the tumour site to activate the immune response by $\ell(t,y)$ cells. 
\begin{center}
\begin{tabular}{l|c|c}
\hline
Parameter/function & Biological meaning&Value\\
\hline
$r(x)$ & Proliferation rate of tumour cells&$0.666-0.132x^2$\\
$d(x)$ & Death rate of tumour cells& $0.5(1-0.3x)$\\
$\mu(x)$& Sensitivity to the effects of the immune response&$1-0.1x^2$\\ 
$\psi(y)$& Efficacy of the immune response&$0.5y^2$\\
$\nu(y)$& Immunotolerance of immune cells induced by tumour cells&$0.5-0.1y$ (in Section~\ref{sec5.2})\\
 & &$1-0.1y$ (in Section~\ref{sec5.3})\\
$k_1$& Natural death rate of competent immune cells& $0.5$ (in Section~\ref{sec5.2})\\
 & & $0.01$ (in Section~\ref{sec5.3})\\
$k_2$& Strength of logistic death rate of na\"ive immune cells&$1.5$\\
$\alpha$& Strength of the immune response&$1$\\
$h$& Strength of treatment with ICIs&$10$ \\
\hline
\end{tabular}
\captionof{table}{Values of the model parameters/functions used to carry out numerical simulations.}
\label{tab1}
\end{center}

 \subsection{Tumour development in the absence of the immune response} We begin by establishing a baseline scenario in which tumour cells proliferate and die according to the modelling approach described in Section \ref{sec3}, i.e., in the absence of the immune response, logistic growth of the tumour cell population. 
 \begin{figure}[H]
    \centering{
\includegraphics[width=0.6\textwidth]{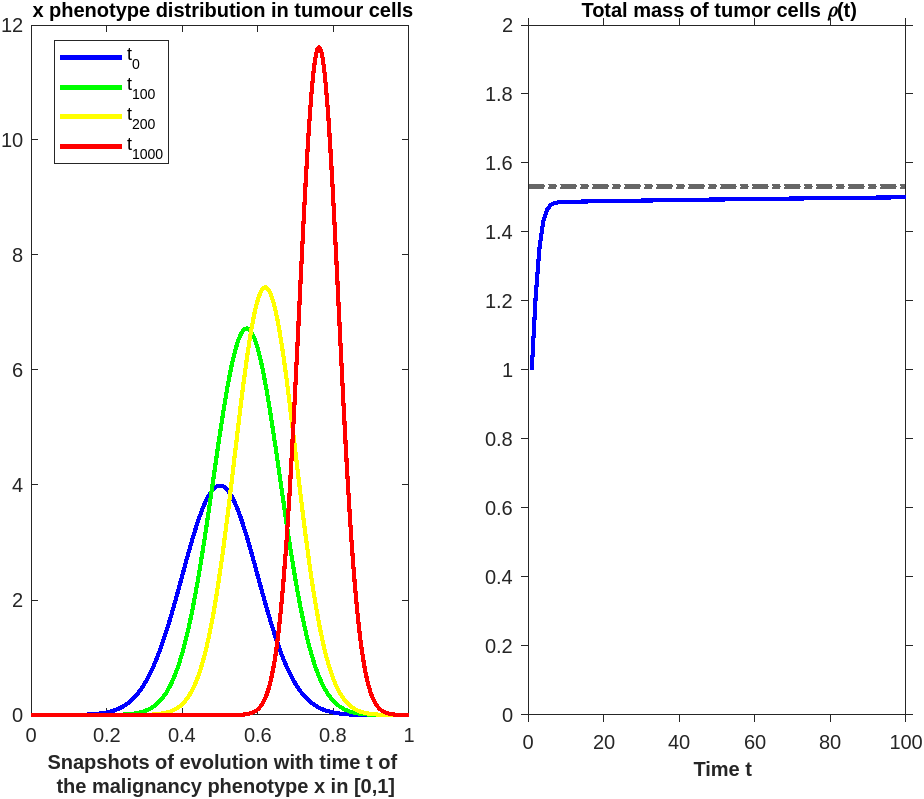}}\vspace{.3cm}
\caption{Numerical simulation of the solution to~(\ref{singleide}) ({\it complete absence of immune response}). {\bf Left panel}, plots of cell densities $n(t,\cdot)$ at different times up to $T=1000$ (in red): the phenotype $x$ evolves towards more and more malignancy. {\bf Right panel}, initial dynamics of the total mass of tumour cells $\rho(t)$ for $t$ between $0$ and $100$. The black dashed line highlights a numerical estimation of the tumour cell carrying capacity $\rho^\star$ and the parameter values are as listed on Table~\ref{tab1}, with $\rho(0)=1$.}
\label{fig:single}
\end{figure}
According to Section~\ref{sec4}, we expect convergence of $\rho$ and weak convergence of $n$ to a weighted Dirac mass. Moreover, the limit for $\rho$ is $\rho^\star=\textstyle \max (\frac{r}{d})$, which corresponds to the carrying capacity of the tumour, i.e., the saturation term reached by the total mass of tumour cells due to within-population competition for space and resources. Here, $\rho^\star\approx 1.53$ and this is what we observe on Figure~\ref{fig:single} to the right. On the other hand, the phenotype at which the density concentrates is located at $\arg \max(\textstyle \frac{r}{d}) = \{x^\star\}$ with $x^\star \approx 0.86$, which becomes apparent on Figure~\ref{fig:single} to the left (and would be seen even more clearly if the simulations were run longer).

As already mentioned in the introduction, we will from now on, when the immune response is activated, interpret solutions for which the total mass of tumour cells approaches this carrying capacity $\rho^\star$ as ``tumour escape''. This represents one case of the three $E$s in which the immune cells are present at the tumour site but are inefficient in interacting with the tumour cells.

\subsection{Simulations in the mixed innate-adaptive case ($0<\lambda<1$), no treatment} \label{sec5.2}
We have explored in simulations separately the innate, adaptive and mixed innate-adaptive cases, which all can lead to the three $E$s. No treatment with {\it ICI}s is considered here, ensured by setting $ICI = 0$. In agreement with Theorems~\ref{thm_pricnipal} and \ref{thm_secondary}, if convergence of the density of cancer cells occurs, we find that tumour cells asymptotically concentrate on a single phenotype, and total masses of cells all converge. Furthermore, the phenotype on which the cancer cell density concentrates as well as the asymptotic masses of cells have been checked to match the specific values uncovered by Theorems~\ref{thm_pricnipal} and \ref{thm_secondary}.  

To avoid repetitive figures, we have chosen to focus mostly on the mixed innate-adaptive case: Figures~\ref{fig:eradication3}, \ref{fig:equilibrium3} and \ref{fig:escape3} illustrate different possible asymptotic behaviours with $\lambda=0.5$. Let us mention that simulations run with $\lambda = 1$ lead to qualitatively similar results.

For simulations illustrated on Figures \ref{fig:eradication3}-\ref{fig:equilibrium3}-\ref{fig:escape3}:
\begin{itemize}
\item[]\textbf{Upper row}. Evolution in time $t$ of the densities $x \mapsto n(t,x)$ (left panel); $y \mapsto \ell(t,y)$ (central panel), and $y \mapsto p(t,y)$ (right panel), with the initial conditions in blue, and the final ones in red.
\item[]\textbf{Lower row}. Initial time dynamics of the total mass of tumour cells $\rho(t)$ (left panel), of the total mass of competent immune cells $\int_0^1\ell(t,y)\, dy$ (central panel), and of the total mass of na\"ive immune cells $\int_0^1 p(t,y)\, dy$ (right panel). 
\end{itemize}
We insist that the final time shown for densities and total numbers of cells might not be the same in a given plot. This is because total numbers of cells rapidly reach equilibrium while densities may converge slowly to their limit, at least when the limit is a Dirac mass.

\paragraph{Results.} When the parameter $s$ is small enough, and for all considered values of the parameter $v$, the total mass of tumour cells decreases steadily over time until the tumour cell population is completely eradicated. This is due to the fact that precise detection and transmission of the malignancy phenotype $x$ by circulating NK-cells and by APCs (i.e., small values of the parameter $s$ in the function $\chi(t,y)$) promotes the eradication of tumour cells by CD8$+$ T-cells.

Eradication also occurs for larger values of $s$ (here $s=1$) as long as $v$ is small enough, see Figure~\ref{fig:eradication3}. In fact, numerical results not displayed here show that for the same value for $s$ but with an innate immune response ($\lambda = 0$), one can obtain escape rather than eradication, highlighting the importance of adaptive immune responses.

Fixing the value $s=1$, the results displayed on Figure~\ref{fig:equilibrium3} show that intermediate values of the parameter~$v$ (which measures the precision of the targeting of cancer cells by the immune response) facilitate the coexistence between tumour and immune CD8+ T-lymphocytes, while the total mass of tumour cells remains at a low level. 

Finally, still with $s=1$, Figure \ref{fig:escape3} shows that a large value of the parameter $v$ leads to tumour escape. Taken together, these results suggest the idea that the efficacy of the anti-tumour immune response is affected by the specificity of the anti-tumour immune response and also by the specificity of the message transmitted by circulating NK-cells to inactive NK-cells and by APCs to na\"ive T cells. Additionally, high values of parameters $s$ and $v$, respectively, are associated with low masses of immune cells and less effective immune response, which may enhance tumour development. 
 
\begin{figure}[h]
	\centering{
  \caption*{\bf{Dynamics of tumour cells, effector and naïve lymphocytes with $(s,v)=(1,0.1)$}}		
  \includegraphics[width=0.8\textwidth]{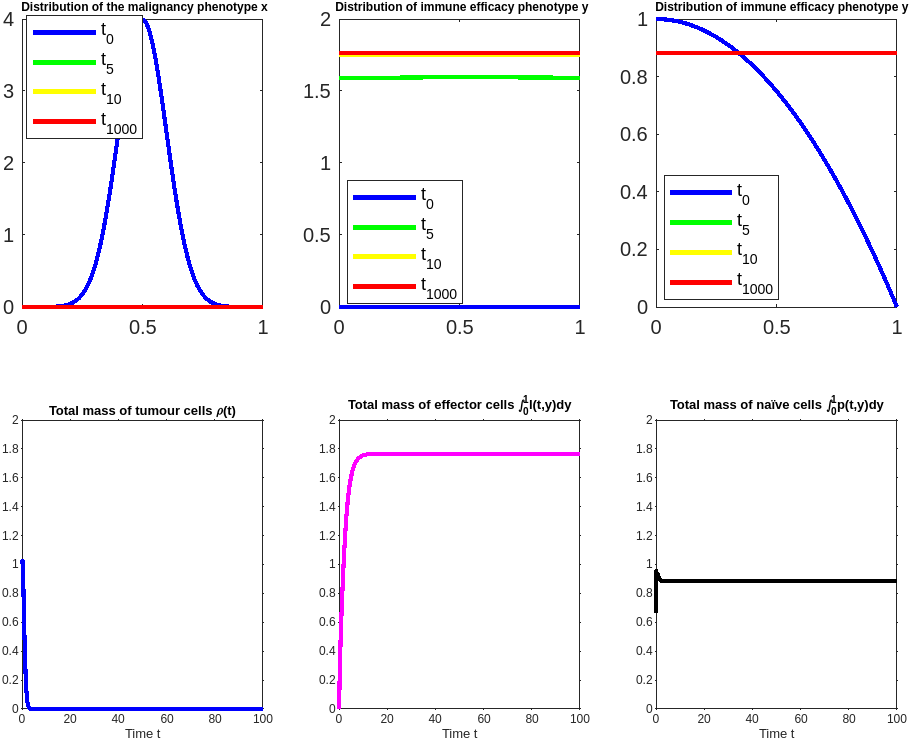}
		\caption{{\small \label{fig:eradication3} {\bf Eradication.} Mixed innate/adaptive case ($\lambda=0.5$). Numerical simulations of (\ref{ide}) with $(s,v)=(1,0.1)$ at different times up to $T=1000$ in red ({\bf{upper row}}) and initial evolution with time $t$ from $0$ to $100$ of the total masses of cells $\int_{0}^{1}n(t,x)\,dx, \int_{0}^{1}\ell(t,y)\,dy,\int_{0}^{1}p(t,y)\,dy$. As one can see on the right panels, eradication comes quickly with this highly precise value of the targeting parameter $v$.}}}
\end{figure}

\begin{figure}[h]
	\centering{
 \caption*{{\bf Dynamics of tumour cells, effector and naïve lymphocytes with $(s,v)=(1,0.5)$}}
		\includegraphics[width=0.8\textwidth]{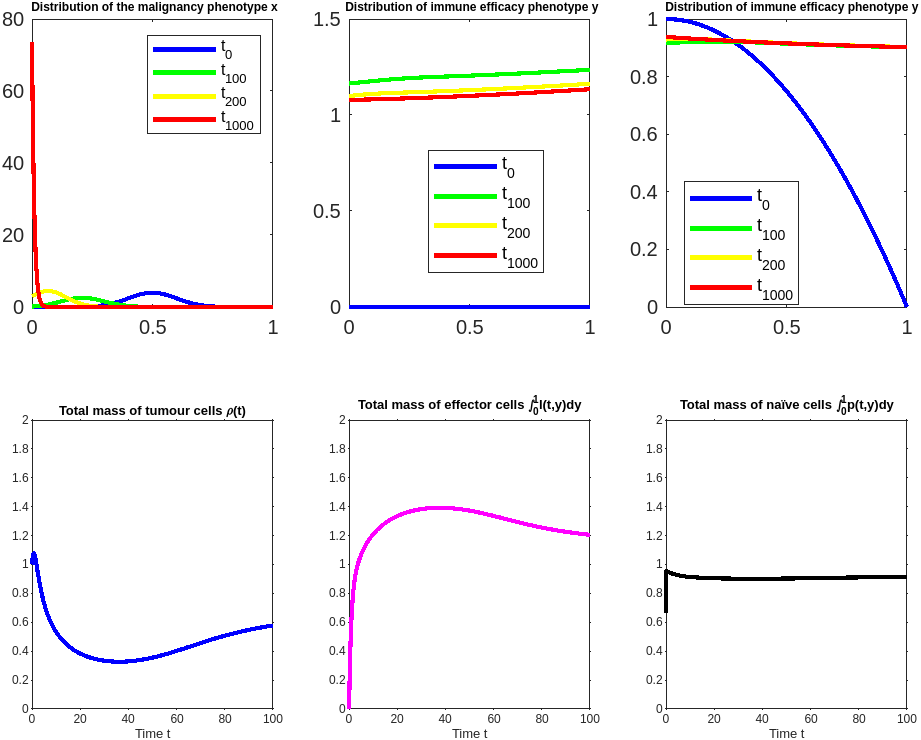}
		\caption{{\small \label{fig:equilibrium3} {\bf Equilibrium.} Mixed innate/adaptive case ($\lambda=0.5$). Numerical simulations of (\ref{ide}) with $(s,v)=(1,0.5)$, plots of the population densities $n, \ell ,p$ at different times up to $T=1000$ in red ({\bf{upper row}}) and initial evolution with time $t$ from $0$ to $100$ of the total masses of cells $\int_{0}^{1}n(t,x)\,dx, \int_{0}^{1}\ell(t,y)\,dy,\int_{0}^{1}p(t,y)\,dy$ ({\bf{lower row}}). Note that the total mass of tumour cells stabilises at an intermediate value between extinction and the tumour carrying capacity, and that the malignancy phenotype $x$ concentrates on the phenotype $0$.}}}
		
\end{figure}

\begin{figure}[h]
	\centering{
  \caption*{\bf{Dynamics of tumour cells, effector and naïve lymphocytes with $(s,v)=(1,1)$}}		
  \includegraphics[width=0.8\textwidth]{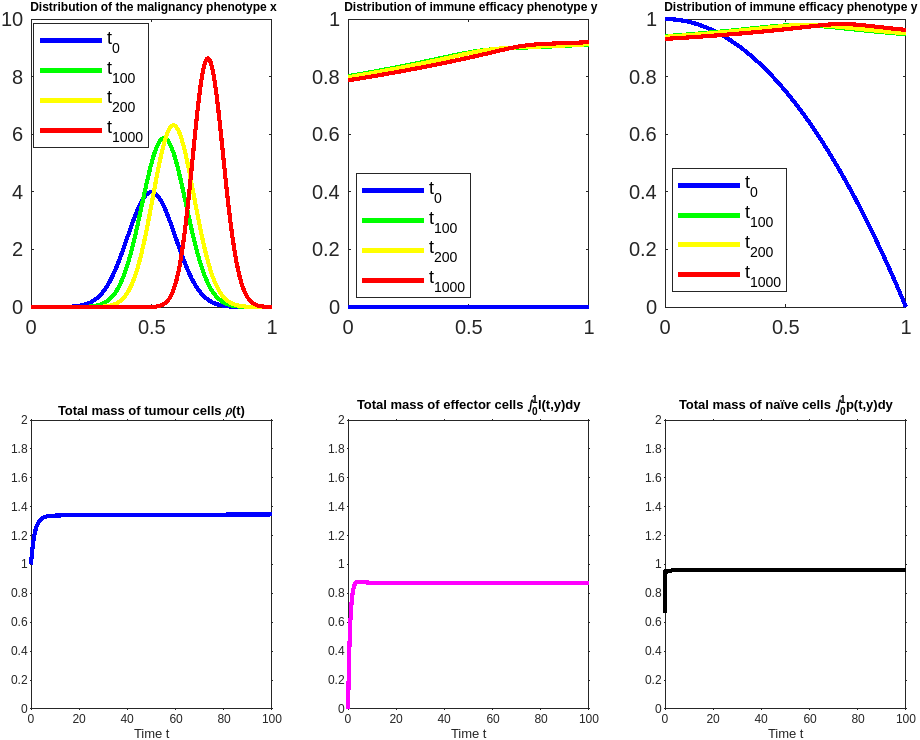}
		\caption{{\small \label{fig:escape3} {\bf Escape.} Mixed innate/adaptive case ($\lambda=0.5$). Numerical simulations of (\ref{ide}) with $(s,v)=(1,1)$, plots of the population densities $n, \ell ,p$ at different times up to $T=1000$ in red ({\bf{upper row}}) and initial evolution with time $t$ from $0$ to $100$ of the total masses  $\int_{0}^{1}n(t,x)\,dx, \int_{0}^{1}\ell(t,y)\,dy,\int_{0}^{1}p(t,y)\,dy$  ({\bf{lower row}}). Note that the total mass of tumour cells stabilises to a value close to the tumour carrying capacity $\rho^\star$ and that the malignancy phenotype $x$ concentrates around a phenotype close to $1$.}}}
		
\end{figure}
As shown on Figures \ref{fig:eradication3}-\ref{fig:escape3}, for an intermediate value of the parameter $\lambda$ in $[0,1]$ and a rather imprecise value of the detection parameter $s$, here fixed at $s=1$, we can see that for low values of the targeting parameter $v$ (i.e., high precision of targeting), the specific anti-tumour immune response involving CD8+ T-lymphocytes is relatively high, leading to the total eradication of tumour cells; that intermediate values of $v$ lead to a co-existence state representing tumour-immune response equilibrium; and finally, that high values of $v$ (i.e., poor precision of targeting) decrease the efficacy of the specific immune response.

This is also illustrated on Figure~\ref{heatmaps} by a heatmap representation, where we plot the asymptotic tumour mass relative to the tumour carrying capacity (i.e.,$\tfrac{\rho^\infty}{\rho^\star}$) as a function of the parameter $(s,v)$, with a mixed innate/adaptive immune response ($\lambda=0.5$). The value $\rho^\infty$ is estimated by the value $\rho(T)$ with $T = 500$.

One can in particular see (left part of the heatmap figure) that 
whatever the value of the detection parameter $v$ is, the model still yields low values of the total mass of tumour cells, i.e., efficacy of the immune response, provided that the precision of targeting is high enough (i.e., $s$ is small enough). On the right lower part corresponding to $s$ and $v$ close to $1$, escape is the outcome, which corresponds to the immune system failing to control the tumour. 

\paragraph{Non-extinction condition.}
Part of these results can be interpreted in light of our non-extinction condition~\eqref{H}, which leaves only equilibrium and escape as possible outcomes. The corresponding assumption in the adaptive case \vspace{-.2cm}reads 
\begin{equation*}
\min_{x \in [0,1]} (r(x)-\mu(x)\overline{\varphi}(x)) > 0, \qquad \overline{\varphi}(x) = \frac{\rho^\star}{k_1 k_2} \int_0^1 \Psi(x,y) \, \omega^M(y) \,dy. 
\end{equation*}
Hence, one can see that if $\overline{\varphi}$ is sufficiently small on the whole of $[0,1]$, this condition is satisfied. Given our choice of functions $\Psi$, $\omega$, we have $w^M = \tfrac{\alpha}{s}$, which leads to 
\begin{equation*}
\overline{\varphi}(x) = \frac{1}{s} \frac{\alpha \rho^\star}{k_1 k_2} \int_0^1 \big((1-\lambda) + \lambda\frac{1}{v} e^{-|x-y|/v}\big) \Psi(y)  \,dy. 
\end{equation*}
Consequently, the non-extinction condition will become valid for $s$ sufficiently large. This observation is coherent with the results obtained above. In view of Figure~\ref{fig:eradication3}, $s=1$ is not large enough a value for the non-extinction condition to be satisfied when $v=0.1$, since one obtains eradication in this case.  
Note that if $\lambda = 1$,  $v$ sufficiently large also enforces the non-extinction condition, making extinction impossible.

\paragraph{Asymptotic reduction to the ODE~(\ref{odesys}).}
As can be seen for most simulations, both immune cell densities converge to equilibrium distributions that are close to constant functions, while the cancer cell density either converges to $0$ or to a Dirac mass. Asymptotically, one can hence approximate the IDE system by the ODE system~\eqref{odesys}. More precisely, denoting $x^\star$ the phenotype on which the cancer cell density concentrates, $t \mapsto (\rho(t), \int_0^1 \ell(t, y)\,dy, \int_0^1 p(t, y)\,dy)$ is asymptotically close to solving~\eqref{odesys} with parameter values $r = r(x^\star)$, $d = d(x^\star)$, $\mu = \mu(x^\star) \int_0^1 \Psi(x^\star, y)\,dy$, $\nu = \int_0^1 \nu(y)\,dy$,  $\omega = \int_0^1 \omega(x^\star, y)\,dy$. This result is obtained by integrating each equation of~\eqref{ide} with respect to the structuring variable.
\begin{figure}[H]
	\centering{
		\includegraphics[width=0.9\textwidth]{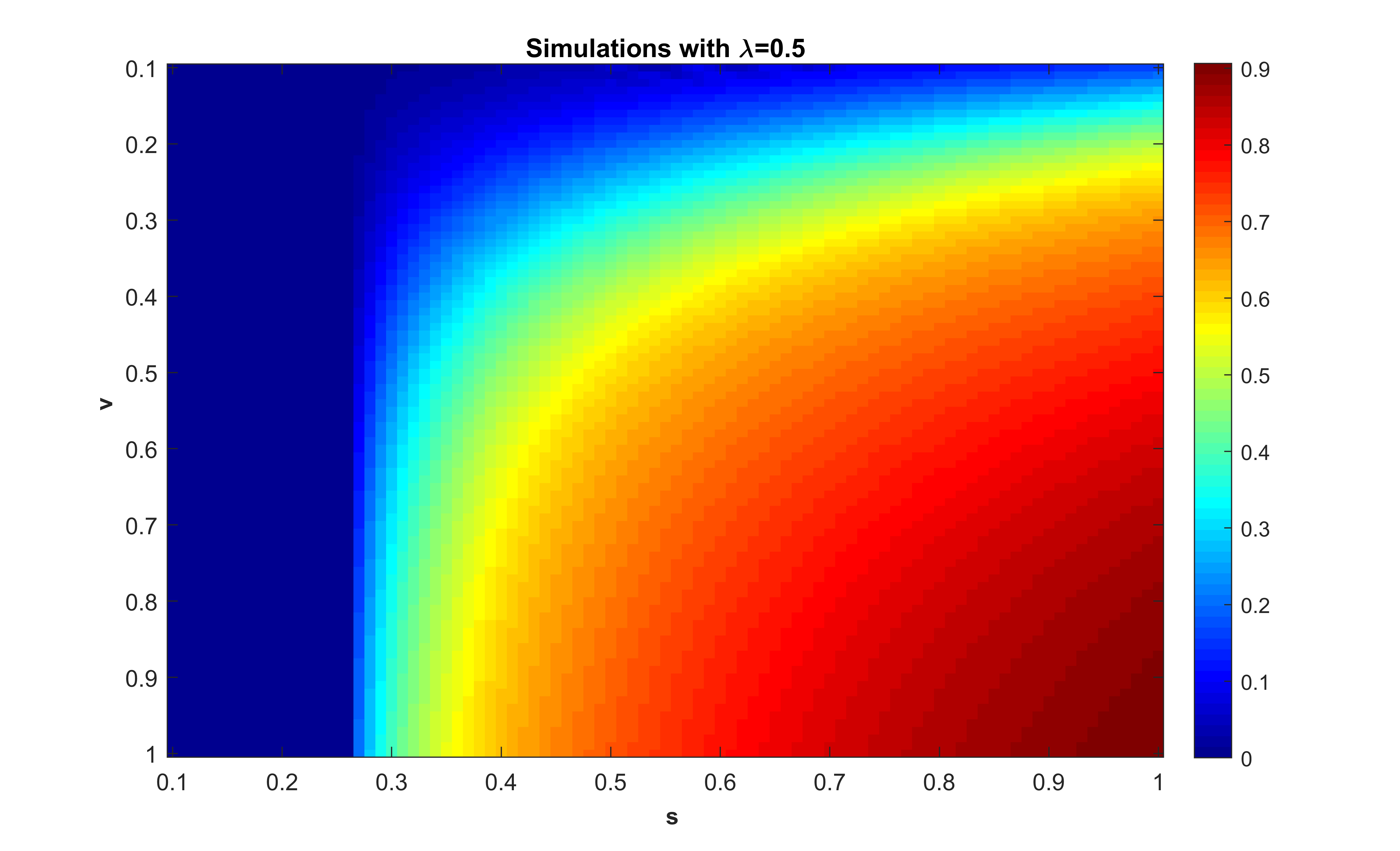}
		\caption {{\small \label{heatmaps}{{\bf Precision of detection parameter $s$ vs. precision of targeting parameter $v$.} (Here again, $\lambda=0.5$). Heatmap representation of the contribution of the two localisation kernel parameters $s$ and $v$ to the relative
density $\frac{\rho^{\infty}}{\rho^{\star}}$ of total tumour cells at the end of simulations ($T=500$), in the case without treatment. Recall that $\rho^{\star}$ is the maximum possible density (the carrying capacity) of tumour cells. Of note, as already mentioned above, one can see that, provided that the precision of detection is high enough (low values of the parameter $s$, between $0.1$ and $0.25$), whatever the precision of the targeting parameter $v$ in a wide range between $0.1$ and $1$, the immune response yields extinction or quasi-extinction of the tumour mass (deep blue rectangular zone on the left). Conversely, for values of poor precision of $s$ and $v$, here $(s,v)$ in a neighbourhood of $(1,1)$, one can see that anti-tumour efficacy of the immune response is poor.}}}
		}
\end{figure}

Taken together, the numerical results that we have presented in the previous subsections suggest that the model has validity for providing a consistent qualitative description of the anti-tumour immune response involving both NK cells and CD8+ T-lymphocytes. 

\subsection{From escape to eradication with $ICI$s, strictly adaptive response ($\lambda=1$)\label{sec5.3}}

Starting from a situation in which we have tumour escape without $ICI$s, we show how introducing them may lead the tumour cell population to equilibrium, and by increasing the drug dose, to elimination. 
We consider the strictly adaptive case where $\Psi$ is given by~\eqref{adaptative-function}, i.e., $\Psi_\lambda$ with $\lambda=1$ (T-cells only).

We fix $(s,v)=(1,2)$, the other parameter values being as listed in Table~\ref{tab1}. The drug dose is successively set to $ICI = 0$, to $ICI =1$ and to $ICI = 10$, with $h=10$. As Figure~\ref{control} shows, these choices lead to escape, equilibrium and eradication, respectively. Escape is associated with a distribution of the malignancy phenotype moving to the right (with eventual concentration towards a Dirac mass located on the right), while equilibrium is associated with a phenotype remaining at around $x=0.5$, and eradication occurs without any visible density shift to either side (eventually yielding $0$ - formally, as this value corresponds to a vanished cell population). This last point, in apparent contradiction with the situation illustrated on Figure~\ref{fig:equilibrium3}, where a clear shift to the left is apparent, may be interpreted in light of the high value of parameter $v$ in the present case, i.e., of a low precision of the targeted immune response.
\begin{figure}[H]
  \centering
  \begin{subfigure}{0.49\linewidth}
    \centering
    \includegraphics[width=.9\linewidth]{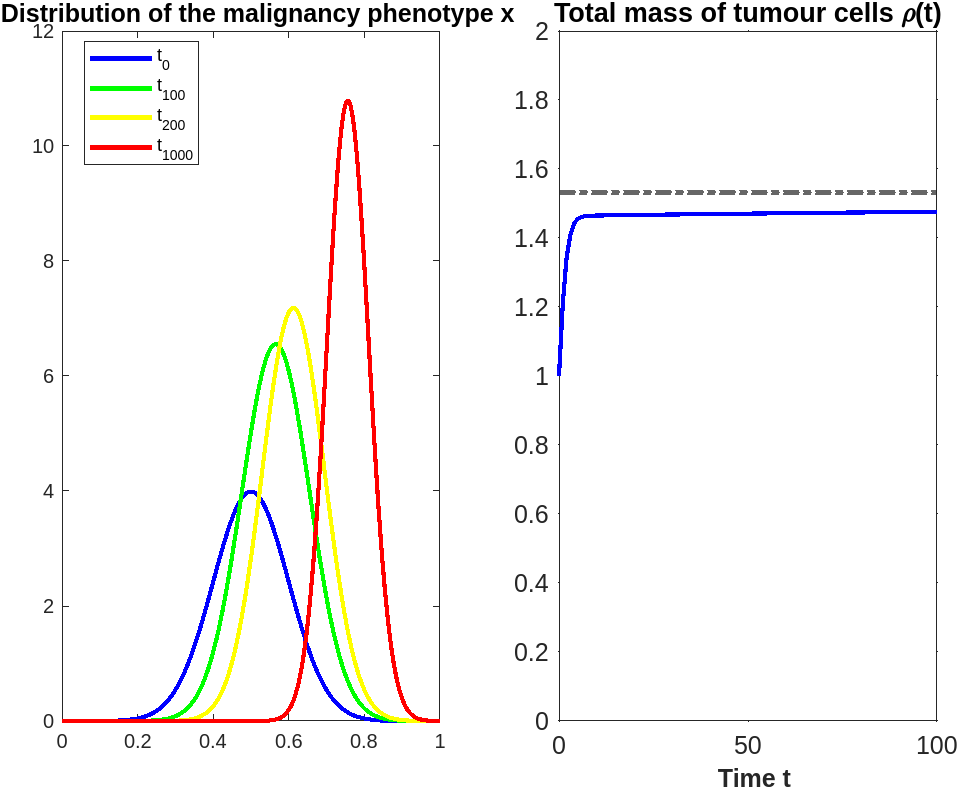}
    \caption{{\bf Escape.} Simulation with $ICI=0$.}
  \end{subfigure}
 \begin{subfigure}{0.49\linewidth}
    \centering
    \includegraphics[width=.9\linewidth]{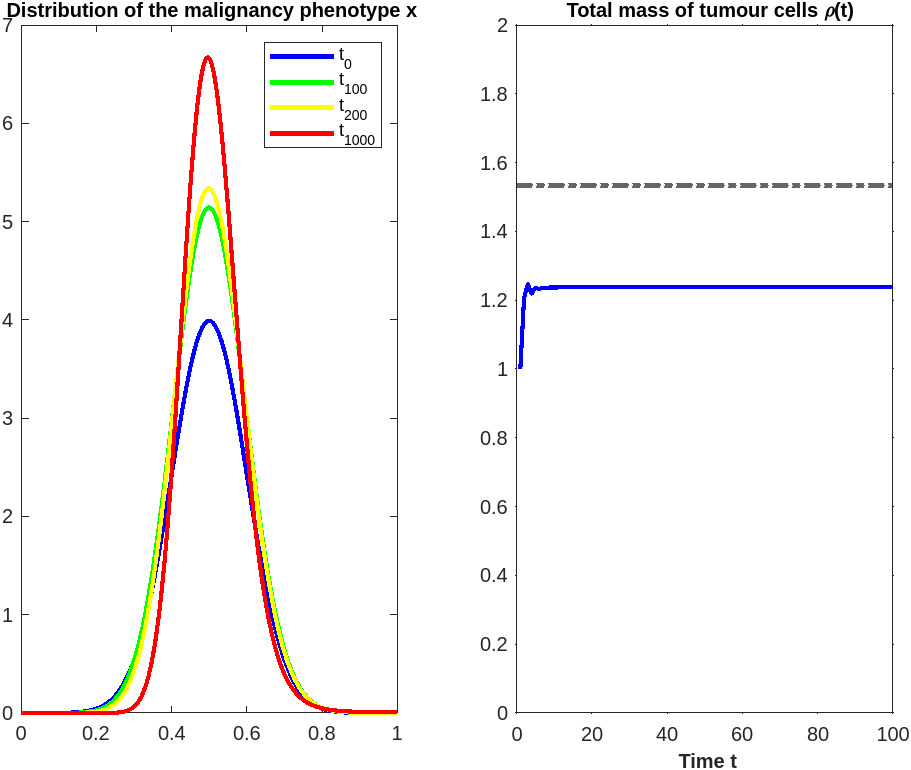}
    \caption{{\bf Equilibrium.} Simulation with $ICI=1$.}
  \end{subfigure} 
  
  \begin{subfigure}{0.49\linewidth}
    \centering
    \includegraphics[width=.9\linewidth]{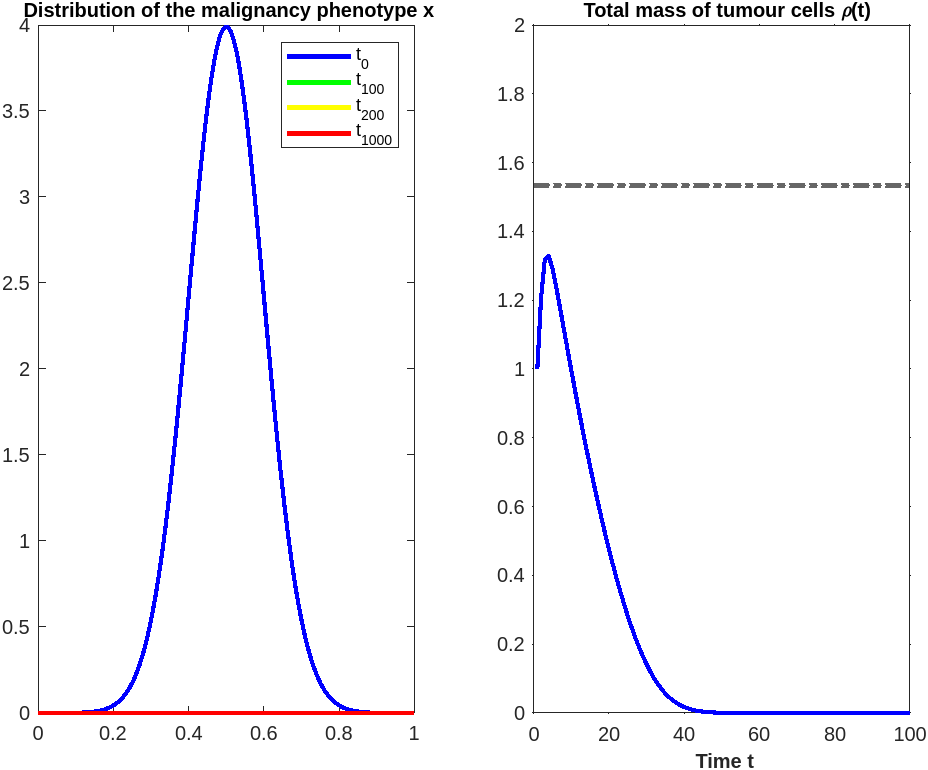}
    \caption{{\bf Eradication.} Simulation with $ICI=10$.}
  \end{subfigure}  
  \caption{Numerical simulations of (\ref{ID}) with $\lambda=1$ (T-cells only), $(s,v)=(1,2)$, and increasing levels of $ICI$s. Plots of the population density of tumour cells $n$ at different times up to $T=1000$ in red ({\bf{left panel}}) and initial evolution with time $t$ from $0$ to $100$ of the total mass $\int_{0}^{1}n(t,x)\,dx$  ({\bf{right panel}}). The black dashed line stands for the tumour carrying capacity $\rho^\star$.}  
  \label{control}
\end{figure}

 \subsection{Periodic solutions} We can also numerically address the existence of periodic solutions. We first take all the parameters and functions to be equal to those chosen for the ODE model in the periodic case, i.e., leading to Figure~\ref{odefig}. Then, we perturb them by a small parameter $0<\delta\ll 1$. In this case, an oscillatory behaviour also emerges, corresponding to a co-existence state representing a time-dependent periodic solution, see Figure~\ref{periodic}. We have not been able to analytically address the existence of periodic solutions, except for the very specific case where all functions are constant, in which case we recover the model~\eqref{odesys}, for which we know that periodic solutions do exist~\cite{Kaidetal}. 
\begin{figure}[H]
	\centering{
		\includegraphics[width=.65\linewidth]{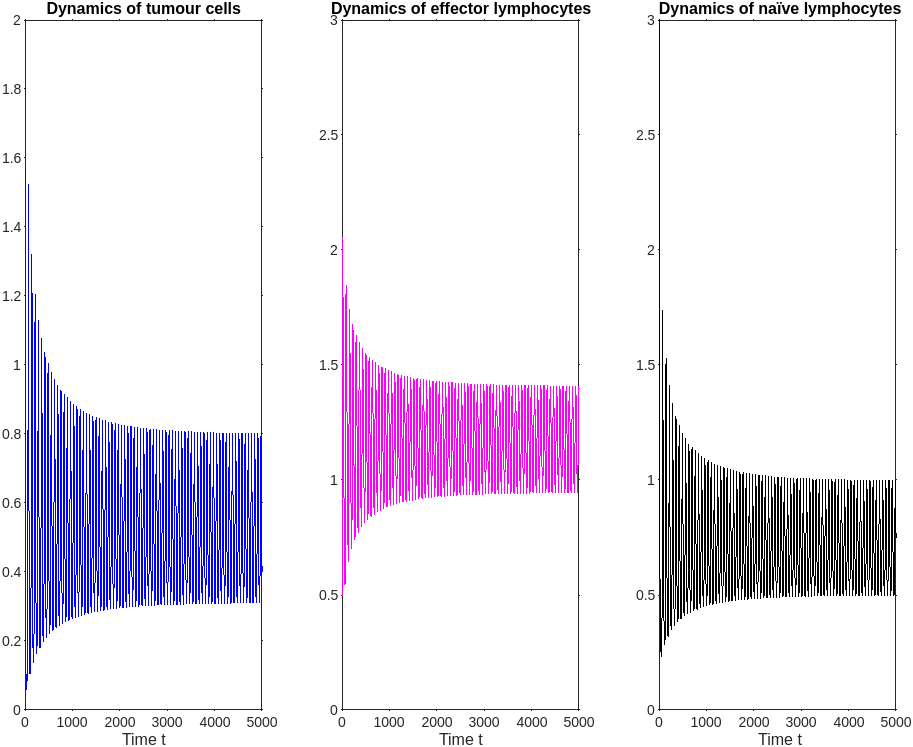}
		\caption{{\small \label{periodic}{Evolution with time of the tumour total density $\rho(t)$ (in blue), competent T cells total density $\sigma(t) = \int_0^1\ell(t,y)\,dy$ (in magenta), and the total density of na\"ive T cells $\int_0^ 1p(t,y)\,dy$ (in black) for $T=5000$.}}}}
\end{figure}\vspace{-0.2cm}

\section{Conclusions and research perspectives}\label{sec6}
\subsection{Summary of the mathematical results}
We have proposed a new mathematical model of tumour-immune interactions in which cell populations are structured by continuous phenotype variables representing their aggressiveness. Despite its simplicity, our model features some relevant phenomena, and it captures the three $E$s of immunoediting - eradication, equilibrium and escape. In particular, it reproduces the formation of an equilibrium, which characterises the capacity of the immune system to contain tumour growth.

In Section \ref{sec4}, we showed through an asymptotic analysis of the model that under the {\it a priori} assumption that the population of tumour cells converges to a certain measure, such a measure can precisely be characterised when it is not the trivial measure. 

We explained why convergence cannot be the general outcome: our model does have the ODE system~\eqref{odesys}, with known possible periodic behaviour, as a particular case.
Finding which parameters lead to convergence or to oscillatory behaviours is a completely open question.

Our model can incorporate three different types of anti-tumour immune responses: innate, adaptive, and a combination of both immune responses. By numerically comparing these three cases in Section \ref{sec5}, the outcomes are as follows:

\paragraph{{\bf{Innate anti-tumour immune response.}}} If the parameter $s$, which determines how localised the phenotype $x$ is with respect to the phenotype $y$, is small enough, then the tumour is always eliminated. For intermediate values of $s$, we obtain convergence to a limit coherent with Theorem \ref{thm_pricnipal}: a coexistence state occurs, yielding a persistent tumour cell population at a controlled level. Finally, high values of the parameter $s$ reduce the efficacy of the anti-tumour immune response and lead to tumour escape. For particular choices of the model parameters, the numerical results also show periodic solutions, characterised by periodic alternating growth and decay of all the immune and tumour cell populations. 

\paragraph{{\bf{Adaptive anti-tumour immune response.}}}
The situations that we have numerically explored in the adaptive anti-tumour response, showed that both the specificity of the response of competent immune cells (i.e., the parameter~$v$) and the specificity of the message transmitted by APCs (i.e., the parameter~$s$) play a key role in the tumour-immune interactions. In fact, when $s$ and $v$ are both small, our results indicate that tumour eradication can occur, while higher values of $s$ or $v$ may result in tumour escape.

\paragraph{{\bf{Combination of the innate and the adaptive anti-tumour immune responses.}}}Increasing the specificity of the adaptive immune response (low values of the parameter $v$) has a beneficial effect on the immune response to tumours, whereas higher values of the parameter $v$ can be detrimental to the anti-tumour immune action.

\paragraph{{\bf{Simulations of the effect of constant drug doses.}}}
Our numerical simulations show that a constant control allows to maintain the total density of tumour cells below its carrying capacity and prevents malignant tumour cells from taking over the whole population. We have also shown that slightly changing the immunotolerance rate along with the natural death rate of competent T cells improves the immune check-point inhibitor immunotherapy efficacy and that it can bring tumours from escape to eradication.

\subsection{Biological interpretations} Taking for granted the existence of a continuous malignancy trait in tumour cells, that we relate to a `degree of stemness' or de-differentiation potential, and similarly, of a continuous potential of tumour cell-kill in lymphocytes at the contact of tumour cells, we have qualitatively produced scenarios that reproduce the three $E$s of immunoediting. We have shown that the initial malignancy trait of tumour cells is affected by the immune response, with or without boosting by $ICI$ therapy, and that it will typically concentrate on a pointwise value, meaning that tumour cells as a population organise their stemness trait around a fixed dominant characteristic. Note that this concentration effect is an expected consequence of the choice of a Lotka-Volterra, here non local, model for the cancer cell population dynamics.

Whether this sharp malignancy trait is increased or decreased by the immune response cannot {\it a priori} be decided, as its determinants depend in a complex way on the entangled functions $d$, $r$, $\mu$ and $\varphi$ that govern the proliferation of the tumour cell population. If this model has some relevance with the reality of antitumoral immune response, it means that the effect of lymphocytes attacking a tumour may as well increase or decrease its stemness, which to the best of our knowledge is not inconsistent with biological observations so far. From a therapeutic point of view, we have shown, as proofs of concept, numerical case studies in which a tumour can be brought from escape to extinction, or at least equilibrium, by continuous delivery of $ICI$s.

It should also be mentioned that the design of this model, its characteristics and the results of its mathematical analysis can be enlightened by informal opinions heard in conversations or oral reports among oncologists, such as {\it ``This is a very aggressive tumour, very undifferentiated and escaping all drugs''} for tumour cell populations, or {\it ``Initially vigorous immune cells can, under the influence of immunotolerance induced by tumour cells, lose their vigor and become exhausted''} for immune cell populations. Such informal representation of a common aggressiveness potential in cell populations may be seen as tentatively formalised by the present phenotype-structured model of tumour-immune interactions.

\subsection{Possible generalisations}
Firstly, we plan to extend the model considered in this paper to carry out a mathematical study of tumour-response interactions, taking into account non-genetic instability, which may be considered as mediated by random epimutations in populations of tumour cells. In this respect, a modelling approach analogous to the one presented in \cite{RCLCJ}, would consist in modifying system \eqref{ID} as follows:
 \begin{equation}
 \label{pde}
 \left\{
 \begin{array}{ll}
 \displaystyle\frac{\partial n}{\partial t}(t,x)=\left[r(x) - d(x) \rho(t)-\mu(x)\varphi(t,x)\right]n(t,x) + \beta\frac{\partial^2 n}{\partial x^2}(t,x),& \\[0.3cm]
 \displaystyle\frac{\partial \ell}{\partial t}(t,y)=p(t,y)-\left(\nu(y)\rho(t)+k_1\right)\ell(t,y),&\\[0.3cm]
 \displaystyle\frac{\partial p}{\partial t}(t,y)=\chi(t,y)p(t,y)-k_2p^{2}(t,y),
 \end{array}%
 \right.
 \end{equation}
with Neumann boundary conditions at $x=0$ and $x=1$ for the cancer cell density $n(t, \cdot)$. The linear diffusion operator $\textstyle \beta\frac{\partial^2 n}{\partial^2 x}(t,x)$, with $0<\beta \ll 1$, represents here a malignancy phenotype lability (uncertainty) linked to the extreme plasticity of cancer cells \cite{Shen}, that are able to vary their phenotype in response to any (drug or other environmental) insult.

Another natural way to extend our work would be to introduce a population of antigen-presenting cells (APCs), that recognises a tumour antigen as their cognate one to activate na\"ive T-cells, instead of the time-independent shortcut function $\omega(x,y)$ (see Section \ref{sec3}). Delays might also naturally be introduced in this bidirectional communication process.

Future research perspectives, from the point of view of confronting the model to data, are to identify its parameters, making use of preclinical and clinical data on the growth of in-vivo tumours in laboratory rodents and in melanoma patients exposed to {\it ICI} therapies. This, however, will necessarily rely on long-term collaborations with teams of laboratory experimentalists and clinicians, towards whom we have here only set this physiologically based model as a basis for interactive discussions to assess it qualitatively.

Finally, as exemplified in \cite{P-al, Olivier2019}, it would be relevant to address the numerical optimal control of model~\eqref{ID} in order to identify possibly optimal delivery schedules for the {\it ICI} therapies, which will also be intended in the framework of an interactive collaboration with experimentalists and clinicians.

\section*{Acknowledgments}

The authors are gratefully indebted to Beno\^it Perthame and Luis Almeida for enriching discussions about model design, and to Alexandre Poulain for both model design and advice in the simulations in Matlab.

\end{document}